\begin{document}

\newtheorem{theorem}{Theorem}[section]
\newtheorem{lemma}[theorem]{Lemma}
\newtheorem{proposition}[theorem]{Proposition}

\newtheorem{statement}[theorem]{Statement}
\newtheorem{conjecture}[theorem]{Conjecture}
\newtheorem{problem}[theorem]{Problem}
\newtheorem{corollary}[theorem]{Corollary}

\newtheorem*{maintheorem1}{Theorem A}
\newtheorem*{maintheorem2}{Theorem B}

\newtheoremstyle{neosn}{0.5\topsep}{0.5\topsep}{\rm}{}{\bf}{.}{ }{\thmname{#1}\thmnumber{ #2}\thmnote{ {\mdseries#3}}}
\theoremstyle{neosn}
\newtheorem{remark}[theorem]{Remark}
\newtheorem{definition}[theorem]{Definition}
\newtheorem{example}[theorem]{Example}

\numberwithin{equation}{section}

\newcommand{\Aut}{\,\mathrm{Aut}\,}
\newcommand{\Inn}{\,\mathrm{Inn}\,}
\newcommand{\End}{\,\mathrm{End}\,}
\newcommand{\Out}{\,\mathrm{Out}\,}
\newcommand{\Hom}{\,\mathrm{Hom}\,}
\newcommand{\ad}{\,\mathrm{ad}\,}
\newcommand{\SLn}{$\rm sl(n)$}
\newcommand{\calL}{{\mathcal L}}
\renewenvironment{proof}{\noindent \textbf{Proof.}}{$\blacksquare$}
\newcommand{\GL}{\,\mathrm{GL}\,}
\newcommand{\SL}{\,\mathrm{SL}\,}
\newcommand{\PGL}{\,\mathrm{PGL}\,}
\newcommand{\PSL}{\,\mathrm{PSL}\,}
\newcommand{\PSO}{\,\mathrm{PSO}\,}
\newcommand{\SO}{\,\mathrm{SO}\,}
\newcommand{\Sp}{\,\mathrm{Sp}\,}
\newcommand{\PSp}{\,\mathrm{PSp}\,}
\newcommand{\UT}{\,\mathrm{UT}\,}
\newcommand{\Exp}{\,\mathrm{Exp}\,}
\newcommand{\Rad}{\,\mathrm{Rad}\,}
\newcommand{\charr}{\mathrm{char}\,}
\newcommand{\tr}{\mathrm{tr}\,}
\newcommand{\diag}{{\rm diag}}
\newcommand \eps {\varepsilon}

\font\cyr=wncyr10 scaled \magstep1%

\def\Sha{\text{\cyr Sh}}

\title[Elementary equivalence of  automorphism groups of periodic Abelian groups]{Elementary equivalence of  endomorphism rings and automorphism groups of periodic Abelian groups}

\keywords{elementary equivalence, second order equivalence, periodic Abelian groups, endomorphism rings,  automorphism groups. $p$-groups} 
\subjclass[2020]{03C52, 20K10}

\author{Elena Bunina}
\date{}
\address{Department of Mathematics, Bar--Ilan University, 5290002 Ramat Gan, ISRAEL}
\email{helenbunina@gmail.com}

\begin{abstract}
	 In this paper, we prove that the endomorphism rings $\End A$ and $\End A'$ of periodic infinite Abelian groups $A$ and $A'$ are elementarily equivalent if and only if the endomorphism rings of their $p$-components are elementarily equivalent for all primes $p$. Additionally, we show that the automorphism groups $\Aut A$ and $\Aut A'$ of periodic Abelian groups $A$ and $A'$ that do not have $2$-components and do not contain cocyclic $p$-components are elementarily equivalent if and only if, for any prime $p$, the corresponding $p$-components $A_p$ and $A_p'$ of $A$ and $A'$ are equivalent in second-order logic if they are not reduced, and are equivalent in second-order logic bounded by the cardinalities of their basic subgroups if they are reduced. According to~\cite{AVM=B-R}, for such groups $A$ and $A'$, their automorphism groups are elementarily equivalent if and only if their endomorphism rings are elementarily equivalent, and the automorphism groups of the corresponding $p$-components for all primes $p$ are elementarily equivalent.
\end{abstract}

\maketitle

\section{Introduction}\leavevmode

In this paper, we consider the elementary properties (i.e., properties expressible in first-order logic) of the endomorphism rings and automorphism groups of periodic Abelian groups.

The first to study the connection between the elementary properties of different models and the elementary properties of derivative models was A.I. Maltsev~\cite{Maltsev} in 1961. He proved that the groups $\mathcal G_n(K)$ and $\mathcal G_m(L)$, where $\mathcal G = \GL, \SL, \PGL, \PSL$ and $n, m \geqslant 3$, and $K, L$ are fields of characteristic $0$, are elementarily equivalent if and only if $m = n$ and the fields $K$ and $L$ are elementarily equivalent.

In 1992, this theory was further developed using ultraproduct constructions and the Keisler-Chang Isomorphism Theorem~\cite{Keisler} by K.I.\,Beidar and A.V.\,Mikhalev in the paper~\cite{BeiMikh}, where they found a general approach to problems of elementary equivalence among different algebraic structures and generalized Maltsev's theorem to the case where $K$ and $L$ are skew fields or prime associative rings.

The continuation of this research was carried out by E. Bunina (\cite{Bun2}--\cite{Bun5}, 1998--2019), where the results of A.I. Maltsev were extended to unitary linear groups over skew fields and associative rings with involutions, as well as to Chevalley groups over fields and commutative rings.

In general, there are numerous examples of broad classes of standard algebraic structures (such as fields or rings) and their classical derived structures (e.g., linear groups, Chevalley groups, etc.), where the elementary equivalence of these derived structures depends on matching both the parameters used in their construction (e.g., dimensions, root systems) and the elementary equivalence of the initial structures (such as fields or rings).

However, there are also more expressive derived structures for which elementary equivalence implies the stronger logical equivalence of the underlying structures.

We recall that  \emph{second-order logic} extends first-order logic by allowing quantification over relations, functions, and sets, not just individuals within the domain. This extended quantification enables second-order logic to express properties about structures that first-order logic cannot, providing a richer framework to analyze mathematical theories and structures. A \emph{second-order theory} of a structure captures all statements that hold true within the structure under this expanded framework. When considering \emph{second-order theories constrained by a cardinality}~$\varkappa$, we further limit the scope of second-order quantifiers to subsets or relations of this cardinality~$\varkappa$.

In 2000, Tolstykh examined the connection between second-order properties of skew fields and first-order properties of the automorphism groups of infinite-dimensional spaces over these skew fields \cite{Tolstyh}. He showed that these automorphism groups are elementarily equivalent if and only if the original skew fields are equivalent in second-order logic bounded by the cardinalities of dimensions of these spaces.

In 2003, Bunina and Mikhalev extended Tolstykh's results to categories of modules, endomorphism rings, automorphism groups, and projective spaces of modules of infinite rank over associative rings (see~\cite{categories}).

The famous Baer–Kaplansky Theorem (see~\cite{Fuks}) states that a periodic Abelian group is uniquely determined by its endomorphism ring; that is, if two periodic Abelian groups have isomorphic endomorphism rings, then they are also isomorphic. Similar results on isomorphic automorphism groups of Abelian 
$p$-groups were proven by Leptin and Libert for the cases 
$p\geq 5$ (see~\cite{Leptin}) and 
$p\geq 3$ (see~\cite{Liebert}), respectively (the case 
$p=2$ remains open). These are typical examples of initial structures (periodic Abelian groups or Abelian $p$-groups) and their derived structures (endomorphism rings or automorphism groups). Thus, it is natural to ask about the necessary and sufficient conditions under which these derived rings or groups are elementarily equivalent.

In~\cite{Abelian}, Bunina and Mikhalev established a connection between the second-order properties of Abelian 
$p$-groups and the first-order properties of their endomorphism rings. The works of Roizner \cite{My1}, \cite{My2}, and \cite{My3}, along with the final result of Bunina, Mikhalev, and Roizner~\cite{AVM=B-R}, focus on the elementary equivalence of automorphism groups of Abelian 
$p$-groups, specifically for $p\geq 3$.

The following theorem was ultimately proved:

\begin{theorem}[see~\cite{AVM=B-R}]
	The endomorphism rings $\End A$ and $\End A'$ of Abelian $p$-groups $A$ and $A'$, or the automorphism groups $\Aut A$ and $\Aut A'$ of Abelian $p$-groups $A$ and $A'$ with $p \geqslant 3$, are elementarily equivalent if and only if 
	
	\emph{1)} one of these groups is reduced and
	$
	Th_2^{\varkappa}(A) = Th_2^{\varkappa'}(A'),
	$
	where $\varkappa$ and $\varkappa'$ are the cardinalities of the basic subgroups of $A$ and $A'$, respectively;
	
	\emph{2)} if $A$ or $A'$ is not reduced, then 
	$
	Th_2(A) = Th_2(A').
	$
\end{theorem}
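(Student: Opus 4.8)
The plan is to deduce the statement from a chain of mutual interpretations, assembling the pieces that appear in \cite{Abelian}, \cite{My1}--\cite{My3} and \cite{AVM=B-R}; I sketch the overall architecture. There are two halves. The \emph{synthetic} direction shows that $Th_2^{\varkappa}(A)=Th_2^{\varkappa'}(A')$ (full $Th_2$ in the non-reduced case) forces $\End A\equiv\End A'$ and $\Aut A\equiv\Aut A'$. The \emph{analytic} direction shows the converse: $\End A\equiv\End A'$, resp.\ $\Aut A\equiv\Aut A'$ (here $p\geqslant 3$), forces the corresponding second-order equivalence of the groups.

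For the synthetic direction I would note that $\End A$ is uniformly interpretable in the second-order structure on $A$: an endomorphism is the graph of an additive function $A\to A$, and ring addition and composition are first-order definable from such graphs, so every first-order sentence about $\End A$ translates into a fixed second-order sentence about $A$; when $A$ is not reduced this already gives $\End A\equiv\End A'$. When $A$ is reduced one must present an endomorphism by an object of size $\leqslant\varkappa$: here one uses that a basic subgroup $B\leqslant A$ of cardinality $\varkappa$ is dense in the $p$-adic topology and endomorphisms are continuous, so $\varphi\mapsto\varphi|_B$ is injective, the graph of $\varphi|_B$ is a $\varkappa$-subset of $A\times A$, and ``a given partial map $B\to A$ extends to an endomorphism'' is expressible with quantifiers bounded by $\varkappa$. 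Thus the interpretation lives inside $Th_2^{\varkappa}$. Since $\Aut A$ is the definable group of units of $\End A$, the same interpretation yields $\Aut A\equiv\Aut A'$.

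For the analytic direction the core is to interpret the (suitably bounded) second-order theory of $A$ in the first-order ring $\End A$, which is essentially the content of \cite{Abelian} and is where I expect the real work to be. I would first recover inside $\End A$ a fixed decomposition of (a basic subgroup of) $A$ into cyclic summands by choosing a maximal orthogonal family of primitive idempotents; the idempotents ``diagonal'' with respect to this family are in first-order bijection with the subsets of an index set of size $\varkappa$, so quantification over idempotents in $\End A$ simulates second-order quantification over $\varkappa$-sized subsets. One then rebuilds $A$ itself as an interpretable structure --- as a set with $+$ and its $\mathbb Z/p^n$-module structure, e.g.\ via the left modules $e\End A$ for suitable idempotents $e$ --- together with the membership relation to the coded subsets, and checks that the Ulm--Kaplansky data of $A$ (which for reduced $p$-groups is exactly what $Th_2^{\varkappa}$ pins down) is read off first-order in $\End A$. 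When $A$ is not reduced the divisible part makes $|\End A|$ large enough to code all of $\mathcal P(A)$, yielding full $Th_2$. To pass from $\Aut A$ to $\End A$ one interprets the ring in the group: for $p\geqslant 3$ the congruence subgroup $1+p\Rad A$ is definable and the truncated $p$-adic logarithm and exponential give definable maps recovering the addition and multiplication of a large ideal, from which the full ring structure of $\End A$ is reconstructed --- this is precisely where $p\geqslant 3$ is needed, since for $p=2$ the relevant series converge only on $1+4\Rad A$. Chaining the interpretations gives $\Aut A\equiv\Aut A'\Rightarrow\End A\equiv\End A'\Rightarrow Th_2^{\varkappa}(A)=Th_2^{\varkappa'}(A')$, and the synthetic direction gives the converses.

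The main obstacle is the analytic step: carrying out the interpretation of second-order $A$ in $\End A$ with exactly the right cardinal bound $\varkappa$ in the reduced case, so that the interpreted second-order quantifiers range over all and only the $\leqslant\varkappa$-sized subsets relevant to the invariants of $A$, with no extra second-order content leaking in or out. A secondary difficulty is ensuring all interpretations are uniform in parameters --- so that they transport \emph{theories}, not merely truth in a fixed model --- and keeping the bookkeeping straight across the three regimes (bounded $A$, unbounded reduced $A$, non-reduced $A$).
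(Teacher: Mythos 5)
This statement is not proved in the paper at all: it is quoted verbatim from \cite{AVM=B-R} (with the underlying work in \cite{Abelian}, \cite{My1}--\cite{My3}) and is used as a black box, so there is no in-paper argument to measure your proposal against. Judged on its own terms, your two-way interpretation architecture --- second-order (respectively $\varkappa$-bounded second-order) logic of $A$ interpreting the first-order theory of $\End A$ and $\Aut A$, and conversely --- is the correct skeleton of what those references actually do, and your account of the synthetic direction (coding an endomorphism by its restriction to a basic subgroup, of size $\leqslant\varkappa$, in the reduced case) matches the standard argument.

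Two caveats. First, your proposal is explicitly a plan, not a proof: the entire analytic direction (interpreting the bounded second-order theory of $A$ in the first-order ring $\End A$ with exactly the right cardinal bound, and recovering $\End A$ inside $\Aut A$) is deferred to the cited papers and flagged by you as the main obstacle, so nothing here could stand alone as a proof of the theorem. Second, the mechanism you propose for the passage from $\Aut A$ to $\End A$ --- a definable congruence subgroup $1+p\Rad A$ with truncated $p$-adic logarithm and exponential --- is not how \cite{My1}--\cite{My3} proceed, and it is not the actual source of the hypothesis $p\geqslant 3$. Those papers (and Sections~3--5 of the present one, which recycle the machinery) work with the combinatorics of involutions: an involution $\eps$ corresponds to a direct decomposition $A=A_\eps^+\oplus A_\eps^-$, extremal involutions single out indecomposable summands, and the ring structure is reconstructed from commuting families of such involutions. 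The restriction $p\geqslant 3$ is needed because for $p=2$ multiplication by $-1$ is trivial on elements of order $2$ and the bijection between involutions and direct decompositions collapses --- which is exactly why the present paper must exclude $2$-components --- not because of convergence of exponential series. If you intend to reprove the theorem rather than cite it, you would need to replace that step with the involution-based interpretation or verify independently that your logarithm-based one actually works.
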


At the same time, \emph{weak second-order logic} (WSOL) is a variant of second-order logic in which quantification is limited to finite subsets of the domain, rather than allowing quantification over all possible subsets, as in full second-order logic. This restriction makes WSOL more manageable and closer in expressive power to first-order logic, while still allowing for the expression of certain structural properties that first-order logic cannot capture.

Several recent papers by various mathematicians have focused on a similar topic, specifically the interpretation of WSOL of an initial structure within the first-order logic of a derivative structure based on it.

For instance, in 2018, Kharlampovich and Miasnikov in~\cite{What-does} tackled the problem of elementary classification within the class of group algebras of free groups. We demonstrated that, unlike free groups, two group algebras of free groups over infinite fields are elementarily equivalent if and only if the groups are isomorphic and the fields are equivalent in weak second-order logic. Additionally, we established that the set of all free bases of a free group $F$ is $\emptyset$-definable in the group algebra $K(F)$ when $K$ is an infinite field. The set of geodesics is also definable, and many geometric properties of $F$ are definable in $K(F)$. Consequently, $K(F)$ encapsulates crucial information about~$F$. Similar results hold for group algebras of limit groups.

A significant development in this area was in 2022 the paper \cite{Myas-Kharl-Sohr} by Kharlampovich, Miasnikov, and Sohrabi, which examined groups and algebras where first-order logic has the same expressive power as weak second-order logic. These groups were termed \emph{rich groups}. Examples of such groups and algebras include $\GL_n(\mathbb{Z})$, $\SL_n(\mathbb{Z})$, and $T_n(\mathbb{Z})$ for $n \geq 3$, along with various finitely generated metabelian groups (such as free nonabelian ones), many polycyclic groups, free associative algebras, free group algebras over infinite fields, and others.

Many structures associated with a rich group (or ring) $G$, such as finitely generated subgroups (subrings, ideals), the geometry of its Cayley graph, and several additional features, are uniformly definable within it. By contrast, free and torsion-free hyperbolic groups are not rich --- they are, in fact, quite far from being rich. However, their group algebras over infinite fields are rich. This contrast illustrates just how much more expressive the first-order ring language of a group algebra of a free group is compared to the first-order language of the group itself.

Returning from weak second-order logic to more expressive fragments of second-order logic, we highlight a recent paper by Koberda and Gonzalez~\cite{Koberda}. In this work, the authors demonstrated that the first-order theory of the homeomorphism group of a compact manifold interprets the full second-order theory of countable groups of homeomorphisms of the manifold. Notably, this interpretation is uniform across manifolds with bounded dimension.

These recent findings have led the author to revisit the topic of interpreting the second-order logic of a structure within the first-order logic of certain naturally derived structures.

In this paper we achieve similar results for the endomorphism rings and automorphism groups of almost any periodic Abelian group.
 Since for $p=2$ all results about definability of $2$-groups by their automorphism groups are still open, considering automorphism groups of periodic Abelian groups we need to exclude $2$-components. Also in the case of the automorphism groups we will exclude cocyclic $p$-components (see Example 3.2).

So our result can be formulated as follows:

\begin{theorem}
	The endomorphism rings $\End A$ and $\End A'$ of arbitrary periodic Abelian groups $A=\bigoplus\limits_p A_p$ and $A'=\bigoplus\limits_p A_p'$ are elementarily equivalent if and only if, for any prime $p$, the endomorphism rings of the corresponding $p$-components $\End A_p$ and $\End A_p'$ are elementarily equivalent.
	
	The same is true for the automorphism groups $\Aut A$ and $\Aut A'$, provided that $A_2$ and $A_2'$ are trivial and that $A$ and $A'$ do not contain any nontrivial cocyclic $p$-components.
\end{theorem}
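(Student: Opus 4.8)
The plan is to exploit that, for a periodic Abelian group $A=\bigoplus_p A_p$, both derived structures decompose over the primes. First I record the canonical isomorphisms $\End A\cong\prod_p\End A_p$ (of rings) and $\Aut A\cong\prod_p\Aut A_p$ (of groups): since $\Hom(A_p,A_q)=0$ for distinct primes $p,q$, any endomorphism of $A$ carries each primary component into itself, so it is precisely a family $(\varphi_p)_p$ with $\varphi_p\in\End A_p$ acting coordinatewise (such a family automatically preserves the finite-support condition that defines $\bigoplus_p A_p$); taking groups of units of both sides yields the statement for $\Aut$.

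For the implication ``$(\forall p)\ \End A_p\equiv\End A_p'\ \Rightarrow\ \End A\equiv\End A'$'' I would invoke the Feferman--Vaught theorem for direct products: over the common index set of all primes, coordinatewise elementary equivalence of the factors entails elementary equivalence of the full products. The same reasoning in the group language gives ``$(\forall p)\ \Aut A_p\equiv\Aut A_p'\ \Rightarrow\ \Aut A\equiv\Aut A'$''. This direction requires no hypothesis on the groups.

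For the converse on endomorphism rings the task is to interpret each factor $\End A_p$ inside $\End A=\prod_q\End A_q$ by a formula that is uniform in $p$ and uses no parameters. The $p$-th coordinate projection is a central idempotent $e_p\in\End A$, and $\End A_p$ is canonically isomorphic to the corner ring $e_p\,\End A\,e_p=\{x: e_pxe_p=x\}$ with inherited operations and unit $e_p$. I would characterise $e_p$ by the first-order formula: $e$ is a central idempotent such that $p\cdot(1-e)$ is invertible in the corner ring $(1-e)\,\End A\,(1-e)$, while $p\cdot f$ is non-invertible in $f\,\End A\,f$ for every nonzero central idempotent $f$ with $f\le e$. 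Using the elementary fact that multiplication by $p$ is an automorphism of a periodic Abelian group if and only if that group has trivial $p$-component, the first clause forces $e_p\le e$ and the second forces $e\le e_p$, so $e_p$ is the unique solution (and the formula correctly yields $e_p=0$ when $A_p=0$, so no special case is needed). Consequently every first-order sentence $\varphi$ about $\End A_p$ translates, by relativising its quantifiers to this corner and interpreting $1$ as $e_p$, into a sentence $\varphi^{*}$ about $\End A$ with $\End A\models\varphi^{*}\iff\End A_p\models\varphi$; hence $\End A\equiv\End A'$ forces $\End A_p\equiv\End A_p'$ for every $p$.

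For the automorphism-group assertion I would not run a separate factor-recovery argument but combine the ring case with the cited results. Under the standing hypotheses ($A_2=A_2'=0$ and no non-trivial cocyclic primary components) every non-trivial component is a $p$-group with $p\ge 3$, so the theorem of~\cite{AVM=B-R} for $p$-groups gives $\End A_p\equiv\End A_p'\iff\Aut A_p\equiv\Aut A_p'$ for all $p$; combined with the ring case this yields $\End A\equiv\End A'\iff(\forall p)\ \Aut A_p\equiv\Aut A_p'$. On the other hand, by~\cite{AVM=B-R} (as recalled above), $\Aut A\equiv\Aut A'$ holds if and only if both $\End A\equiv\End A'$ and $(\forall p)\ \Aut A_p\equiv\Aut A_p'$; since the second condition has just been shown to imply the first, this conjunction reduces to $(\forall p)\ \Aut A_p\equiv\Aut A_p'$, which is the claim. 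I expect the main obstacle to be the converse in the ring case --- that is, isolating the projection idempotents $e_p$ by a single parameter-free formula uniform in $p$; one could alternatively first establish that $\End A_p$ is directly indecomposable, but the minimality characterisation above avoids this, after which the model-theoretic bookkeeping (Feferman--Vaught in one direction, relativisation in the other) is routine.
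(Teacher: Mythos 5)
Your argument for the endomorphism rings is correct and follows essentially the same route as the paper: decompose $\End A=\prod_p\End A_p$, use Feferman--Vaught for the easy direction, and for the converse give a parameter-free first-order definition of the central idempotent $e_p$ via the arithmetic of $p$. Your particular way of pinning down $e_p$ (invertibility of $p$ on the complementary corner plus non-invertibility of $p$ on every nonzero central idempotent below $e$) is a clean variant of the paper's formula $\mathbf{Proj}_p$; it avoids the paper's appeal to the Charles--Kaplansky description of the center and its special treatment of a $2$-component of exponent $2$, and it handles $A_p=0$ gracefully. That half is fine.

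The automorphism-group half, however, has a genuine gap. You dispose of the hard direction by asserting that, ``by \cite{AVM=B-R}, $\Aut A\equiv\Aut A'$ holds if and only if both $\End A\equiv\End A'$ and $(\forall p)\,\Aut A_p\equiv\Aut A_p'$.'' But \cite{AVM=B-R} is a criterion for Abelian $p$-groups only; it says nothing about the automorphism group of a general periodic group $A=\bigoplus_p A_p$. The sentence in the abstract that you are leaning on is a \emph{consequence} of the theorem being proved here (combined with \cite{AVM=B-R}), not an independent input, so your argument is circular at exactly the point where the real work lies. Concretely, what is missing is the implication $\Aut A\equiv\Aut A'\Rightarrow(\forall p)\,\Aut A_p\equiv\Aut A_p'$: since elementary equivalence of direct products does not pass to factors, one must interpret each factor $\Aut A_p$ (at least modulo the center) inside $\Aut A$ by parameter-free formulas, and unlike the ring case there is no ready-made supply of central idempotents in the group language to do this. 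This is precisely what Sections 3--5 of the paper accomplish: one first defines extremal and normal involutions, then the relations ``two extremal involutions lie in the same/different $p$-components,'' and finally identifies \emph{which} prime a component belongs to by testing $q$-th root extraction on suitable ``quasi-transvections'' (the group-theoretic surrogate for the divisibility test you used in the ring case). The hypotheses you treated as incidental --- no $2$-components and no cocyclic $p$-components --- are exactly what make this involution machinery work, which is a further sign that the reduction cannot be obtained for free from the $p$-group results.
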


From the previous two theorems we have a direct corollary:

\begin{corollary}
	The endomorphism rings $\End A$ and $\End A'$ of arbitrary periodic Abelian groups $A = \bigoplus\limits_p A_p$ and $A' = \bigoplus\limits_p A_p'$ (or their automorphism groups $\Aut A$ and $\Aut A'$ in the case when $A_2$ and $A_2'$ are trivial and there are no nontrivial cocyclic components $A_p$ and $A_p'$) are elementarily equivalent if and only if for any prime $p$, for the corresponding $p$-components $A_p$ and $A_p'$:
	
	\emph{1)} One of these groups is reduced and
	$$
	Th_2^{\varkappa_p} (A_p) = Th_2^{\varkappa_p'} (A_p'),
	$$
	where $\varkappa_p$ and $\varkappa_p'$ are the cardinalities of the basic subgroups of $A_p$ and $A_p'$, respectively;
	
	\emph{(2)} If $A_p$ or $A_p'$ is not reduced, then 
	$$
	Th_2(A_p) = Th_2(A_p').
	$$
\end{corollary}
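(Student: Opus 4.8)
The plan is to reduce both assertions to the canonical decompositions of the derived structures over the primary components, and then to treat the two implications by very different tools.

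\emph{Reduction.} Each $p$-component $A_p$ is a fully invariant subgroup of $A=\bigoplus_p A_p$, so every endomorphism of $A$ preserves each $A_p$, and conversely any family $(\varphi_p)_p$ with $\varphi_p\in\End A_p$ glues over the direct sum to an endomorphism of $A$. Hence $\End A\cong\prod_p\End A_p$ as unital rings, and, passing to unit groups, $\Aut A\cong\prod_p\Aut A_p$ as groups; trivial components contribute the zero ring, resp. the trivial group, and may be kept in the product harmlessly. So the theorem becomes the claim that, for the families $R_p=\End A_p$, $R_p'=\End A_p'$ (and likewise $G_p=\Aut A_p$, $G_p'=\Aut A_p'$), one has $\prod_p R_p\equiv\prod_p R_p'$ if and only if $R_p\equiv R_p'$ for every prime $p$.

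\emph{The ``if'' direction.} Here I would invoke the Feferman--Vaught theorem on direct products: for each first-order sentence $\varphi$ in the language of rings (resp. of groups) there are finitely many sentences $\psi_1,\dots,\psi_m$ and a Boolean-algebra formula $\Theta(X_1,\dots,X_m)$ with $\prod_p R_p\models\varphi$ if and only if $\mathcal P(\{\text{primes}\})\models\Theta(\|\psi_1\|,\dots,\|\psi_m\|)$, where $\|\psi_j\|=\{p:R_p\models\psi_j\}$. If $R_p\equiv R_p'$ for every $p$, then for each $j$ the set $\|\psi_j\|$ is literally the same whether computed from $(R_p)_p$ or from $(R_p')_p$, so the two products satisfy exactly the same sentences. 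The argument is verbatim the same for groups, so the ``if'' half for $\Aut$ needs no hypothesis on $2$-components or cocyclic components.

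\emph{The ``only if'' direction for $\End$.} The point is that the central idempotent $\pi_p\in\End A$ projecting $A$ onto $A_p$ is first-order definable without parameters. Writing $\pi_{\neq p}=1-\pi_p$, one checks that a central idempotent $e$ satisfies ``$p\cdot e$ is invertible in $e\,\End A\,e$'' exactly when the $p$-component of $e$ vanishes: on a $q$-group with $q\neq p$ multiplication by $p$ is already an automorphism, while a nonzero $p$-group is never $p$-torsion-free, so a nonzero $p$-component of $e$ obstructs invertibility. Thus $\pi_{\neq p}$ is the largest central idempotent satisfying this first-order condition, hence $\pi_{\neq p}$ and $\pi_p$ are definable, and $\End A_p\cong\pi_p\,\End A\,\pi_p=\{x:\pi_p x=x\}$ is interpreted in $\End A$ by a fixed tuple of formulas. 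Since the same formulas interpret $\End A_p'$ in $\End A'$, the standard transfer lemma for interpretations gives $\End A_p\equiv\End A_p'$ from $\End A\equiv\End A'$.

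\emph{The ``only if'' direction for $\Aut$ --- the main obstacle.} A direct factor of a group is in general not definable, and overcoming this is the crux, as well as the reason for the exclusions. The plan is to build, uniformly in $A$, a first-order interpretation of the direct factor $\Aut A_p\le\prod_q\Aut A_q$ by exploiting two facts. First, since $A_p$ is non-cocyclic, $\Aut A_p$ is a non-abelian group rich enough that transvection-type elements of order $p$ can be recognized group-theoretically and a copy of $\End A_p$ reconstructed from them, in the style of \cite{My1,My2,My3,AVM=B-R}. Second, for $q\neq p$ the elements of order $p$ in $\Aut A_q$ are ``semisimple'' --- their centralizers carry only abelian normal $p$-subgroups --- so no transvection-type configuration can occur outside the $p$-slot. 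Hence the transvection-type elements detected inside $\prod_q\Aut A_q$ all lie in $\Aut A_p$; the subgroup they generate is a large normal subgroup of $\Aut A_p$, and $\Aut A_p$ is then recovered by normalizer/centralizer/commutator manipulations around this subgroup (with parameters that are themselves definable). The non-cocyclic hypothesis is exactly what makes $\Aut A_p$ rich enough for this --- for a cocyclic $A_p$ the factor $\Aut A_p$ is abelian and genuinely dissolves into the product, which is the content of Example~3.2 --- and $p=2$ is excluded because the recognition problem underlying the Leptin--Liebert theorems (\cite{Leptin}, \cite{Liebert}) is still open. Once the uniform interpretation is established one concludes via the transfer lemma exactly as for $\End$, and the Corollary follows by combining it with the cited Theorem on $p$-groups.
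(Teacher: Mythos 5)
Your overall architecture coincides with the paper's: the corollary is obtained by first reducing $\End A\equiv\End A'$ (resp. $\Aut A\equiv\Aut A'$) to the componentwise equivalences $\End A_p\equiv\End A_p'$ (resp. $\Aut A_p\equiv\Aut A_p'$) --- which is exactly Theorem~1.2, proved separately in the paper and then simply invoked --- and then composing with Theorem~1.1. Your ``if'' direction via Feferman--Vaught and your ``only if'' direction for $\End$ are correct; indeed, defining $\pi_{\ne p}$ as the largest central idempotent $e$ with $p\cdot e$ invertible in $e\,\End A\,e$ is a clean variant of the paper's formula $\mathbf{Proj}_p$, and it even sidesteps the exceptional case $A_2\cong\bigoplus_\mu\mathbb Z_2$ that the paper must handle when isolating indecomposable central idempotents.

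The genuine gap is in the ``only if'' direction for $\Aut$, which is the hard half of Theorem~1.2 and hence of the corollary. You claim a first-order interpretation of the direct factor $\Aut A_p$ inside $\prod_q\Aut A_q$ on the nose. That is strictly stronger than what the paper establishes and is not justified: the center $Z(\Aut A)=\prod_q Z(\Aut A_q)$ resists being split into its $q$-slots (e.g.\ the central involution $-1$ on $A_p$ versus $-1$ on $A_q$), which is precisely why the paper only interprets $\Aut A_p$ \emph{up to the center of} $\Aut\bigoplus_{q\ne p}A_q$ together with $\Aut A_p/Z(\Aut A_p)$, and why its concluding step passes through $Th_2(A_p)$ rather than through a definable copy of $\Aut A_p$. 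Moreover, the two pillars of your sketch --- that ``transvection-type elements of order $p$'' are group-theoretically recognizable, and that for $q\ne p$ no such configuration can occur in $\Aut A_q$ because centralizers of order-$p$ elements there ``carry only abelian normal $p$-subgroups'' --- are asserted without proof and are far from obvious across all admissible $A_q$ (bounded, unbounded, non-reduced). The paper's actual mechanism is different and concrete: it isolates extremal involutions by a formula, defines the relations $InOneComp$ and $InDiffComp$, attaches quasi-transvections to a pair of commuting extremal involutions, and then identifies the prime of a component by which roots ($q$-th versus $p$-th) these quasi-transvections admit (Lemmas~4.3 and~4.4). As written, the $\Aut$ half of your argument is a programme rather than a proof; it should either be replaced by an argument of the above kind or by an explicit appeal to Theorem~1.2.
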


Since this corollary directly and immediately follows from Theorems 1.1 and 1.2, we will just prove Theorem~1.2 and deal with elementary equivalence without any special studies of second order logics.

In the last section we will give definitions concerning second order logic, some examples and facts about its expressible power and show that pairwise second order logical equivalence of $p$-components of our periodic groups doesn't imply second order equivalence of these groups themselves, so we {\bf cannot formulate} our main result as \emph{the endomorphism rings/automorphism groups of periodic Abelian groups is equivalent to the second order equivalence of the corresponding groups.}

In the next section we will show our result for endomorphism rings, the proof in this case is short and clear.

Then in the next sections we will do the same for automorphism groups.

\section{Elementary equivalence of endomorphism rings of periodic Abelian groups}\leavevmode

Let us assume that $ A = \bigoplus\limits_p A_p $ and $ A' = \bigoplus\limits_p A_p' $ are arbitrary periodic Abelian groups with the corresponding $ p $-components (subgroups containing all elements of orders $ p^n $, $ n \in \mathbb{N} \cup \{0\} $) $ A_p $ and $ A_p' $, where $ p $ is an arbitrary prime number.

In this case, 
\[
\End A = \prod_p \End A_p \quad \text{and} \quad \End A' = \prod_p \End A_p'.
\]

From \cite{Keisler}, we know that elementary equivalence is preserved under taking direct products and direct sums. This immediately implies that if for all primes $ p $ we have $ A_p \equiv A_p' $, then 
\[
\End A = \prod_p \End A_p \equiv \prod_p \End A_p' = \End A'.
\]

Therefore, we only need to prove the inverse implication, and it is sufficient to show that for every prime $ p $, the $ p $-component $ \End A_p $ is elementarily definable in $ \End A $ without parameters, i.e., we can find a formula $ \Phi_p (x) $ in the language of rings of one free variable such that 
$$
\forall r \in \End A \quad \End A \vDash \Phi(r) \Longleftrightarrow r \in \End A_p.
$$

To do this, we will use several well-known facts about endomorphism rings of Abelian groups (all of them can be found in~\cite{Fuks}).

\begin{enumerate}
	\item
	There exists a one-to-one correspondence between
	finite direct decompositions
	$$
	A = A_1 \oplus \dots \oplus A_n
	$$
	of the group $ A $ and decompositions of the ring $ \End(A) $
	in finite direct sums of left ideals
	$$
	\End(A) = L_1 \oplus \dots \oplus L_n;
	$$
	namely, if $ A_i = e_i A $, where $ e_1, \dots, e_n $ are mutually orthogonal
	idempotents, then $ L_i = \End(A)e_i $.
	
	\item
	An idempotent $ e \ne 0 $ is called \emph{primitive} if it cannot be represented
	as a sum of two nonzero orthogonal idempotents.
	If $ e \ne 0 $ is an idempotent of the ring $ \End(A) $, then $ eA $
	is an indecomposable direct summand
	of $ A $ if and only if $ e $ is a primitive idempotent.
	
	\item
	Let $ A = B \oplus C $ and $ A = B' \oplus C' $ be direct decompositions
	of the group $ A $, and let $ e \colon A \to B $ and $ e' \colon A \to B' $
	be the corresponding projections.
	Then $ B \cong B' $ if and only if there exist elements
	$ \alpha, \beta \in \End(A) $ such that
	$$
	\alpha \beta = e \text{ and } \beta \alpha = e'.
	$$
\end{enumerate}

\begin{theorem}[Charles \cite{Sharl1}, Kaplansky \cite{Kaplans}]\label{t2.9}
	The center of the endomorphism ring $\End(A)$ of a~$p$-group~$A$
	is the ring $J_p$ of $p$-adic integers or
	the residue class ring $\mathbb Z_{p^k}$ of the integers modulo~$p^k$,
	depending on whether $A$ is unbounded or bounded with $p^k$
	as the least upper bound for the orders of its elements.
\end{theorem}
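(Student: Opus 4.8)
The plan is to prove the two inclusions of the asserted equality. The inclusion ``$\supseteq$'' is routine: every $p$-group $A$ is canonically a module over the ring $J_p$ of $p$-adic integers, and a $\mathbb Z$-endomorphism of a $p$-group is automatically $J_p$-linear, so multiplication by a fixed scalar commutes with every element of $\End A$; thus scalar multiplications land in $Z(\End A)$, and two $p$-adic integers give the same scalar on $A$ exactly when they agree modulo the supremum of the orders of elements of $A$, which yields a copy of $J_p$ when $A$ is unbounded and a copy of $\mathbb Z_{p^k}$ when $A$ has exponent $p^k$. The whole content is the reverse inclusion: one must show that a central $z\in\End A$ is a $p$-adic scalar. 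As a first reduction, note that $z$ commutes with every idempotent $e$, hence preserves every direct summand $eA$ (because $z(ea)=e(za)\in eA$); in particular it preserves both the maximal divisible subgroup $D$ and a reduced complement $R$ in a decomposition $A=D\oplus R$, and it preserves every cyclic direct summand of $A$.

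Next I would pin down $z$ on cyclic direct summands. If $\langle a\rangle$ is a direct summand, then $z(a)\in\langle a\rangle$ by the previous remark, so $z(a)=c_a a$ for an integer $c_a$ well defined modulo $\mathrm{ord}(a)$. Given two cyclic summands, composing the relevant projection, an isomorphism (or surjection) between them, and the inclusion produces an endomorphism of $A$; commuting it with $z$ via $z\phi=\phi z$ forces the scalars to agree, and when the orders are $p^n\le p^m$ it forces the corresponding residues to be congruent modulo $p^n$. Hence all these residues $c_n$ are compatible and assemble into a single $\pi\in\varprojlim_n\mathbb Z/p^n\mathbb Z=J_p$ (only the top residue, an element of $\mathbb Z_{p^k}$, surviving if $A$ is bounded of exponent $p^k$). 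Now bring in the structure theory (all of it available in \cite{Fuks}): a basic subgroup $B=\bigoplus_n B_n$ of $R$ is pure in $R$ and in $A$, each of its cyclic summands is pure of bounded order and therefore a direct summand of $A$, so $z$ coincides with the scalar $\pi$ on all of $B$; and on $D$, which is a direct sum of copies of $\mathbb Z_{p^\infty}$, the identity $\End(\mathbb Z_{p^\infty})=J_p$ shows (via matrix units) that $Z(\End D)$ consists exactly of the $p$-adic scalars, so $z$ acts on $D$ as some $\pi_D\in J_p$; finally, an endomorphism of $A$ carrying an order-$p^n$ generator of a cyclic summand of $R$ onto an order-$p^n$ element of $D$ forces $\pi_D$ to be congruent to $c_n$ modulo $p^n$, so $\pi_D$ is the same $p$-adic integer $\pi$.

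It remains to transfer the equality ``$z=\pi$'' from $B$ and $D$ to all of $A$. Set $w=z-\pi\in\End A$. Since $z$ and $\pi$ both preserve $R$, so does $w$; since $w$ kills $B$, the induced map $R/B\to R$ has domain a divisible group, hence divisible image inside the reduced group $R$, hence is zero, so $w=0$ on $R$, while $w=0$ on $D$ by construction; thus $z=\pi\cdot\mathrm{id}_A$. Reading off the dichotomy: if $A$ is unbounded (equivalently $D\ne0$ or $R$ unbounded) the compatibility constraints range over all $n$ and the center is the whole of $J_p$; if $A$ is bounded of exponent $p^k$ then $D=0$, $B=R=A$, and the center is exactly $\mathbb Z_{p^k}$.

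The main obstacle is precisely this transfer step: the commutation $z\phi=\phi z$ only directly controls $z$ on elements that lie inside cyclic or Pr\"ufer direct summands of $A$, and reaching an arbitrary element requires the purity of basic subgroups, the fact that bounded pure subgroups split off, and the explicit form of $\End(\mathbb Z_{p^\infty})$ and of $\End D$. Everything else is bookkeeping with orthogonal idempotents and compatible residues modulo powers of $p$.
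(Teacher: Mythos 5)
This is a classical theorem of Charles and Kaplansky that the paper quotes from the literature without giving a proof, so there is nothing in the paper to compare against line by line; your argument is, in substance, the standard proof (central endomorphisms commute with idempotents, hence preserve cyclic and Pr\"ufer direct summands; compatible residues on the basic subgroup assemble into a $p$-adic scalar; divisibility of $R/B$ and reducedness of $R$ transfer the identity $z=\pi$ from $B\oplus D$ to all of $A$), and it is correct. The only cosmetic point is that when $R$ is bounded and $D\ne 0$ the residues $c_n$ determine $\pi$ only modulo the exponent of $R$, so one should simply define $\pi:=\pi_D$ there rather than deduce that $\pi_D$ ``is'' the already-chosen $\pi$; this does not affect the argument.
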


\begin{corollary}
	For the group $ A = \bigoplus\limits_p A_p $, all central idempotents of the ring $ \End A $ are projections onto any direct summands $ B $ of $ A $ of the form 
	$$
	B_M = \bigoplus_{p \in M} A_p, \text{ where } M \text{ is some set of prime numbers}.
	$$
	
	Respectively, indecomposable central idempotents (i.e., not representable as a sum of two non-trivial central idempotents) are precisely projections onto $ A_p $.
\end{corollary}

\begin{proof}
	Since $ \End A = \prod\limits_p \End A_p $, the center of the ring $ \End A $ is the direct product of the centers of the corresponding components:
	$$
	Z(\End A) = \prod\limits_p Z(\End A_p).
	$$
	From Theorem~2.1, we know that $ Z(\End A_p) \cong J_p $ or $ \mathbb{Z}_{p^k} $. All these rings contain only trivial idempotents: $ 0 $ and $ 1 $, and therefore, if $ x = \prod\limits_p x_p \in Z(\End A) $, then for all $ p $ we have either $ x_p = 1 $ or $ x_p = 0 $.
	
	There is one exceptional case for the second statement: if the $ 2 $-component $ A_2 $ is bounded by the number $ 2 $, i.e., $ A_2 \cong \bigoplus\limits_\mu \mathbb{Z}_2 $. In this (and only in this) case, there exists a nontrivial idempotent $ e_2 $ such that $ e_2 + e_2 = 0 $. So in this case, we directly define $ e_2 $ and then work with $ e' = 1 - e_2 $, which is the projection onto all $ A_p $ except $ A_2 $. After that, any indecomposable central idempotent in the ring $ e' \End A $ is a projection onto some $ A_p $, $ p \ne 2 $.
\end{proof}

\medskip

So we can see that the set of all projections onto $ p $-components is definable. Now we want to define precisely the projection onto $ A_p $ for every concrete prime~$ p $. 

Let us denote the projection onto $ A_p $ by $ e_p $. 

Let us fix this $ p $ and note that for any natural $ n $, we can also define $ n \cdot e_p = e_p + \dots + e_p $, knowing the initial $ e_p $. We also note that if $ p $ and $ q $ are coprime, then there exists $ \alpha \in Z(\End A) $ such that $ \alpha \cdot (q e_p) = e_p $, since $ q $ is invertible in $ J_p $ or $ \mathbb{Z}_{p^k} $ in this case. On the other hand, $ p $ is not invertible in $ J_p $ or $ \mathbb{Z}_{p^k} $, so for $ p e_p $, we cannot find $ \alpha \in Z(\End A) $ such that $ \alpha \cdot (p e_p) = e_p $. 

Therefore, we can write down a formula defining (without parameters) the projection $ e_p : A \to A_p $ in $ \End A $:
\begin{multline*}
\mathbf{Proj}_p (x):=
(x^2=x) \land (\forall y (xy=yx))  \land ( \forall y (y\cdot px \ne x) \land \\
\land (\forall y \forall z (y^2=y\land z^2=z \land (\forall t (ty=yt \land tz=zt))
\land y\ne 0\land z\ne 0 \Rightarrow y+z\ne x)). 
\end{multline*}

Now all elements of $ \End A_p $ are those elements for which $ e_p $ is a unit: $ x \cdot e_p = x $.

Therefore, the first part of the main theorem is proved: \emph{endomorphism rings of periodic Abelian groups are elementarily equivalent if and only if endomorphism rings of their $ p $-components are respectively elementarily equivalent for all prime $ p $.}

\section{Involutions and extremal involutions in the groups $\Aut A$}\leavevmode

Next, in the three sections, we will study the automorphism groups of periodic Abelian groups. These sections are mostly devoted to studying involutions and extremal involutions in $ \Aut A $ for periodic Abelian groups. In the case of $ p $-groups, these involutions were used intensively in the papers \cite{My1}, \cite{My2}, \cite{My3}, and \cite{AVM=B-R} and were the most important tools for the proof of the main theorems in those works.

Further in this paper, we will consider periodic Abelian groups $ A = \bigoplus\limits_{p \ne 2} A_p $, which are direct sums of their $ p $-components $ A_p $, where $ A_2 = 0 $.

For torsion groups $ A $, our first concern is to identify the center of $ \Aut A $.

We may restrict our considerations to $ p $-groups, $ p \ne 2 $.

\begin{theorem}[Baer~\cite{Baer}]
	If $p\ne 2$, then the center of the automorphism group of a
$p$-group~$A$ consists of

\emph{(1)}  multiplications by $p$-adic units, if $A$ is unbounded, and

\emph{(2)} multiplications by integers $k$ with $1\leqslant  k < p^n$ coprime to~$p$, if $A$ is bounded, and
$p^n$ is the smallest bound.
\end{theorem}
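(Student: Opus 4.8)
The plan is to prove the nontrivial inclusion, that a central automorphism $\phi\in Z(\Aut A)$ must act as a scalar; the reverse inclusion is immediate, since by Theorem~\ref{t2.9} the multiplications by elements of $J_p$ (resp.\ of $\mathbb Z_{p^k}$) are precisely the center of $\End A$, hence commute with every automorphism, and such a scalar is an automorphism exactly when it is a unit.

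First I would use $p\ne 2$ to show that $\phi$ preserves every direct summand. For a decomposition $A=B\oplus C$ the map $\sigma=\mathrm{id}_B\oplus(-\mathrm{id}_C)$ is an automorphism (since $2$ is invertible in $\End A$), and $B=\ker(\sigma-1)$, $C=\ker(\sigma+1)$; commuting $\phi$ and $\phi^{-1}$ with $\sigma$ gives $\phi(B)=B$, $\phi(C)=C$. In particular $\phi$ fixes every cyclic direct summand $\langle a\rangle$ setwise, so it acts there as multiplication by an integer $c_a$ coprime to $p$, taken modulo $\mathrm{ord}(a)$. Next I would make these scalars coherent using a basic subgroup $B=\bigoplus_n B_n$ of $A$: by a standard property of basic subgroups $B_1\oplus\dots\oplus B_n$ is a direct summand of $A$ for each $n$, so every cyclic summand $\langle b_j\rangle$ of $B$ and every $\langle b_j\rangle\oplus\langle b_k\rangle$ is a direct summand of $A$, and applying the previous remark to the cyclic summand $\langle b_j+b_k\rangle$ of $\langle b_j\rangle\oplus\langle b_k\rangle$ shows $c_{b_j}\equiv c_{b_k}$ modulo the smaller of the two orders. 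Hence the $c_{b_j}$ cohere to a single unit $c\in J_p^{*}$ (or a unit mod $p^k$ if $A$ is bounded) with $\phi=c\cdot\mathrm{id}$ on $B$. If $A$ is reduced, $B$ is dense (for $x\in A$ write $x=b+p^ny$ with $b\in B$, so $\phi(x)-cx=p^n(\phi(y)-cy)$, and $\bigcap_n p^nA=0$), so $\phi=c\cdot\mathrm{id}$ on all of $A$; this is case~(2) when $A$ is bounded and case~(1) otherwise.

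The main obstacle is the non-reduced case $A=D\oplus R$ with $D=\bigoplus_i Q_i$, $Q_i\cong\mathbb Z_{p^\infty}$, nontrivial, because then the basic subgroup only sees $R$ and $\phi$ need not restrict to $R$. I would first dispose of $A=D$: the summand argument applies unchanged (the diagonal of $Q_i\oplus Q_j$ is a direct summand), so $\phi=c\cdot\mathrm{id}_D$ with $c\in J_p^{*}$. In general, $\Hom(D,R)=0$ lets one write $\phi=\left(\begin{smallmatrix} a & b\\ 0 & d\end{smallmatrix}\right)$ relative to $A=D\oplus R$, with $a\in\End D$, $d\in\End R$, $b\in\Hom(R,D)$. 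Commuting $\phi$ with $u\oplus\mathrm{id}_R$ for $u\in\Aut D$ forces $a\in Z(\Aut D)$, hence $a=c_D\cdot\mathrm{id}_D$ by the divisible case, and $b=ub$ for all $u$, hence $b=0$ because the only element of $D$ fixed by every automorphism of $D$ is $0$ (where $p\ne 2$ is used again). Comparing $\phi$ on a summand $\langle b_0\rangle\oplus Q_i$ (a cyclic summand from $B$ and a quasicyclic summand of $D$ lie in complementary summands of $A$) identifies $c_D$ with the scalar $c$ produced from the basic subgroup. Finally, commuting $\phi$ with the unipotent automorphism $\mathrm{id}+f$ for $f\in\Hom(R,D)$ forces $f\circ(c\cdot\mathrm{id}_R-d)=0$ for all such $f$; since every nonzero $p$-group maps nontrivially to $\mathbb Z_{p^\infty}$ (it being injective and containing a copy of every cyclic $p$-group), $\bigcap_f\ker f=0$, so $d=c\cdot\mathrm{id}_R$ and $\phi=c\cdot\mathrm{id}_A$ with $c\in J_p^{*}$, which is case~(1) as $D\ne 0$ makes $A$ unbounded.

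A shorter route sidestepping the case analysis would be to prove that $\Aut A$ generates $\End A$ additively — equivalently, that every endomorphism of an abelian $p$-group with $p$ odd is a sum of two automorphisms, which one gets by reducing modulo $\mathrm{Jac}(\End A)$ to the fact that every matrix over $\mathbb F_p$ ($p\ge 3$) is a sum of two invertible matrices and lifting units through the radical — whence $Z(\Aut A)\subseteq Z(\End A)$ and Theorem~\ref{t2.9} finishes at once. I would present the direct argument above instead, since controlling $\End A/\mathrm{Jac}(\End A)$ for unbounded $A$ is itself delicate; in the direct argument the genuine obstacle is precisely the bookkeeping of the non-reduced decomposition described in the previous paragraph.
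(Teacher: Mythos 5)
The paper itself does not prove this statement (it is quoted from Baer's 1943 paper), so there is no in-text proof to compare against; I can only assess your argument on its own terms. The overall strategy --- use $p\ne 2$ to show a central automorphism preserves every direct summand, read off integer scalars on the cyclic summands of a basic subgroup, make them cohere via diagonal summands, and then treat the divisible part with triangular matrices and the square-zero unipotents $1+f$ --- is sound, and almost every step checks out. But there is one genuine error, in the step extending $\phi=c\cdot\mathrm{id}$ from the basic subgroup $B$ to all of a reduced unbounded $A$: you assert that $\bigcap_n p^nA=0$ because $A$ is reduced. That is false. Reduced $p$-groups can have a nonzero first Ulm subgroup $A^1=\bigcap_n p^nA$; the generalized Pr\"ufer group $H_{\omega+1}$ is the standard example. ``Reduced'' (no nonzero divisible subgroups) is strictly weaker than ``separable'' (no nonzero elements of infinite height), and your density computation only shows that $\phi-c\cdot\mathrm{id}$ maps $A$ into $A^1$, which need not be trivial.

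Fortunately the gap is local and easily repaired with tools already on your page: the endomorphism $\psi=\phi-c\cdot\mathrm{id}$ vanishes on $B$, hence factors through $A/B$, which is divisible; its image is therefore a divisible subgroup of the reduced group $A$, hence zero. (Alternatively, commute $\phi$ with the automorphisms $1+f$ for $f\in\Hom(\langle a\rangle,C)$, where $A=\langle a\rangle\oplus C$ and $\langle a\rangle$ is a cyclic summand of order $p^n$; since $f(a)$ ranges over all of $C[p^n]$, this shows $\phi$ is the scalar $c_a$ on all of $A[p^n]$, and one lets $n\to\infty$ using unboundedness of $B$.) With either patch the reduced case closes, and the rest of your argument --- in particular the treatment of $A=D\oplus R$ via $\Hom(D,R)=0$, the identification of $\phi|_D$ as a $p$-adic unit, the vanishing of the off-diagonal entry because only $0\in D$ is fixed by $-\mathrm{id}$ when $p\ne2$, and the cogeneration argument $\bigcap_{f\in\Hom(R,D)}\ker f=0$ --- is correct. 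Your aside about the alternative route through ``every endomorphism is a sum of two automorphisms'' is rightly flagged as a nontrivial theorem in its own right and is best left out.
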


Therefore, the center of $ \Aut A $ of our periodic $ A $ is the direct product of the corresponding centers of $ \Aut A_p $, as described in the previous theorem.

Let us mention that if $ A_p \cong \mathbb{Z}_{p^k} $ or $ \mathbb{Z}_{p^\infty} $ (these two types together are called \emph{cocyclic} Abelian $ p $-groups, see~\cite{Fuks}), then $ \Aut A_p $ is abelian and coincides with its center. In all other cases, it is non-Abelian, contains at least two independent cocyclic direct summands, and respectively contains different involutions which correspond to direct decompositions of $ A_p $.

If our periodic Abelian group contains several $ p $-components isomorphic to cocyclic groups, then we generally cannot define it by its automorphism group. Let us show a simple example demonstrating this:

 \begin{example}
 Let us take two periodic Abelian groups: $ A_1 = \mathbb{Z}_3 \oplus \mathbb{Z}_{13} $ and $ A_2 = \mathbb{Z}_5 \oplus \mathbb{Z}_7 $. Their automorphism groups are:
 $$
 \Aut A_1 = \Aut (\mathbb{Z}_3) \oplus \Aut (\mathbb{Z}_{13}) = \mathbb{Z}_2 \oplus \mathbb{Z}_{12} \cong \mathbb{Z}_2 \oplus \mathbb{Z}_4 \oplus \mathbb{Z}_3
 $$
 and 
 $$
 \Aut A_2 = \Aut (\mathbb{Z}_5) \oplus \Aut (\mathbb{Z}_{7}) = \mathbb{Z}_4 \oplus \mathbb{Z}_{6} \cong \mathbb{Z}_2 \oplus \mathbb{Z}_4 \oplus \mathbb{Z}_3,
 $$
 so we have two periodic non-isomorphic finite Abelian groups with isomorphic automorphism groups.
 \end{example}

Since for periodic Abelian groups with cocyclic $ p $-components, their automorphism groups often do not define these groups, we can assume that all $ p $-components of our groups $ A_1 $ and $ A_2 $ are not cocyclic (and therefore have non-trivial direct summands).

\bigskip

Any involution $ \eps $ in our group $ A $ without $ 2 $-components corresponds to a decomposition of the group $ A $ into a direct sum $ A = A_\eps^+ \oplus A_\eps^- $, where $ A_\eps^+ = \{ a \in A \mid \eps a = a \} $ and $ A_\eps^- = \{ a \in A \mid \eps a = -a \} $. Of course, $ A_\eps^+ = \bigoplus\limits_{p \ne 2} A_{p,\eps}^+ $ and $ A_\eps^- = \bigoplus\limits_{p \ne 2} A_{p,\eps}^- $.

From \cite{Fuks2015}, page 659, we know that for $ p $-groups, \emph{two involutions $ \theta, \xi $ of $ A $ commute if and only if
	$$
	A_\theta^- = (A_\theta^- \cap A_\xi^-) \oplus (A_\theta^- \cap A_\xi^+) \text{ and } A_\theta^+ = (A_\theta^+ \cap A_\xi^-) \oplus (A_\theta^+ \cap A_\xi^+)
	$$
	and similarly for $ A_\xi^- $, $ A_\xi^+ $}.

If two involutions $ \theta, \xi $ of a periodic Abelian group $ A $ commute, then their corresponding involutions $ \theta_p, \xi_p $ of the $ p $-parts of $ A $ also commute. For every part $ A_p $, we have similar decompositions, and therefore we have globally this decomposition for $ \theta $ and $ \xi $. It is clear that the same is true in the opposite direction.

The following statement from \cite{Fuks2015}, page 659, is also true for periodic groups without $ 2 $-components:

\emph{A direct decomposition
$$
A \cong  C_1\oplus \dots \oplus C_n, \quad C_i\ne 0,
$$
defines a set  $\{ \theta_1,\dots, \theta_n\}$ of commuting involutions such that $\theta_i\vert_{C_j} =\delta_{ij} id$. Conversely, a set $\{ \theta_1,\dots, \theta_n\}$ of pairwise commuting involutions determines a unique decomposition $A \cong  C_1\oplus \dots \oplus C_n$.}

\begin{definition}
	An involution $\eps$ is said to be \emph{extremal} if either $A_\eps^+$ or $A_\eps^-$ is indecomposable.
	\end{definition}
	
	\medskip
	
	From  the~book~\cite{Fuks} (vol.~2, p.~310) we know that for any Abelian $p$-group $A$, $p\ne 2$:
	
	\emph{An involution $\eps$ is extremal if and only if the~group~$Z(C\{ \eps, \zeta\})$ contains not more than $8$ involutions for all~$\zeta\in C\{ \eps\}$.}
	
	\medskip
	
	Now if $ A = \bigoplus\limits_{p \ne 2} A_p $, then for any involution $ \eps \in \Aut A $, if $ \zeta $ commutes with $ \eps $, then for any prime $ p $, the corresponding involution $ \eps_p $ in $ \Aut A_p $ commutes with the corresponding $ \zeta_p \in \Aut A_p $. If we denote the corresponding $ Z(C(\eps_p, \zeta_p)) \in \Aut A_p $ by $ ZC_p $, then $ Z(C\{ \eps, \zeta \}) = \prod\limits_p ZC_p $.
	
	Let us consider different cases:
	
	\smallskip
	
	(1) If on $ A_p $ both involutions $ \eps_p $ and $ \zeta_p $ are central, then $ C(\eps_p, \zeta_p) = \Aut(A_p) $ and $ ZC_p $ contains only central involutions of $ \Aut A_p $, i.e., $ \pm 1 $ on $ A_p $.
	
	\smallskip
	
	(2) If on $ A_p $ one of the involutions $ \eps_p $ or $ \zeta_p $ is central while the other is not, then any involution from $ ZC_p $ is either central or this non-trivial $ \eps_p $ or $ \zeta_p $ multiplied by $ \pm 1 $.
	
	\smallskip
	
	(3) If on $ A_p $ both $ \eps_p $ and $ \zeta_p $ are not central (and commute), then in the case of extremal $ \eps_p $, the subgroup $ ZC_p $ contains not more than $ 8 $ involutions in $ \Aut A_p $ and not more than $ 4 $ modulo center. If $ \eps_p $ is not extremal and not central on $ A_p $, then we can find $ \zeta_p $ on $ A_p $ such that the corresponding $ ZC_p $ contains at least $ 8 $ different involutions modulo center.
	
		\smallskip
	
Therefore, we see that if $ \eps_p $ is non-central and non-extremal for at least one $ p $, then there exists an involution $ \zeta \in C(\eps) $ such that $ Z(C(\eps, \zeta)) $ contains more than $ 4 $ involutions modulo $ Z(\Aut A) $.

If $ \eps $ is extremal and non-central in at least two components, then there exist two primes $ p \ne q $ such that $ \eps_p $ and $ \eps_q $ are extremal. We can take a corresponding $ \zeta $ such that $ \zeta_p $ and $ \zeta_q $ are extremal, commute with $ \eps_p $ and $ \eps_q $, and differ from the corresponding $ \eps_p $ and $ \eps_q $ modulo center (we can do this since all components are not cocyclic). In this case, in each component $ \Aut A_p $ and $ \Aut A_q $, we have $ 4 $ involutions modulo the corresponding centers in $ ZC_p $ and $ ZC_q $. This means that we have at least $ 8 $ involutions modulo the center $ Z(\Aut A) $ in $ Z(C(\eps, \zeta)) $.

So we see that an extremal involution in $ \Aut A $ can be defined by the formula

\begin{multline*}
Extreme (\eps):= (\eps \ne e) \land (\eps^2=e)\land\\
\land  \forall \zeta \,  \left(\zeta^2=e \land \eps \zeta = \zeta \eps \Longrightarrow \forall \xi_1,\xi_2,\dots, \xi_5 \, \left( \left(\bigwedge_{i=1}^5 \xi_i^2=e\right) \land  \right.\right.\\
 \left. \left. \land \forall x \left((x\eps =\eps x\land x \zeta = \zeta x)
\Rightarrow  \bigwedge_{i=1}^5 \xi_i x=x\xi_i\right) \Rightarrow \left(\bigvee_{i\ne j} \exists z (\forall y (zy=yz)\land \xi_i=z\xi_j) \right) \right) \right).
\end{multline*}
	
	This formula~$Extreme(\eps)$ means that the automorphism~$\eps$ is an extreme involution (i.\,e., an involution which has one of its summands~$A_\eps^+$ or $A_\eps^-$ to be indecomposable).
The indecomposable summand for the~involution~$\eps$ is denoted by~$A_\eps$, while the other summand is denoted by~$A_\eps^\perp$. Saying ``$\eps$ acts on~$A_p$'' or ``$\eps$ belongs to~$A_p$'' we will mean $A_\eps \subset A_p$.

\begin{lemma}[see \cite{My1}]
	For a periodic Abelian group $A$ without $2$-components  and cocyclic $p$-components the formula
\begin{multline*} \eps \in \eps_1 \oplus \eps_2 :=\\
	=\forall \eps' \Bigl(Extreme(\eps') \land
		(\eps'\eps_1 =
		\eps_1\eps' )\land (\eps'\eps_2 = \eps_2\eps' )
		\land (\eps'\ne\eps_1 )\land ( \eps'\ne\eps_2) \Rightarrow ( \eps\eps' =
		\eps'\eps)\Bigr),
\end{multline*}
	for extremal involutions $\eps$, $\eps_1$, $\eps_2$ such that $\eps_1
	\eps_2 = \eps_2 \eps_1$, means that $A_\eps \subset A_{\eps_1}
	\oplus A_{\eps_2}$, and $A_\eps^\perp\supset A_{\eps_1}^\perp \cap
	A_{\eps_2}^\perp$.
\end{lemma}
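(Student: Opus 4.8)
The proof I have in mind reduces the statement to the case of a single $p$-group, where it is the lemma of \cite{My1}, by using that extremal involutions of a periodic group $A=\bigoplus_{p\ne 2}A_p$ are concentrated on one $p$-component together with the componentwise description of commutation recalled in this section. First I would establish a normal form for extremal involutions: since $A_\eps$ is indecomposable it lies inside a single component $A_p$, so $\eps$ acts as $+1$ (or $-1$) on $A_\eps\subset A_p$ and necessarily as $-1$ (resp. $+1$) on every $A_q$, $q\ne p$, and on the complementary eigenspace of $A_\eps$ inside $A_p$; in particular $A_\eps^\perp=\bigl(\bigoplus_{q\ne p}A_q\bigr)\oplus(A_\eps^\perp\cap A_p)$ is determined by $\eps$, and $\eps$ belongs to a well-defined prime $p$. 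With this, and with the dictionary of \cite{Fuks2015} between families of pairwise commuting involutions and direct decompositions, together with the observation of this section that involutions of $A$ commute iff all their $p$-components commute, the universal quantifier over $\eps'$ in the formula can be analysed component by component.

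Next I would run the two implications. Observe first that taking $\eps'=-\eps_1$ and $\eps'=-\eps_2$ (these are extremal, commute with $\eps_1$ and $\eps_2$ because $\eps_1\eps_2=\eps_2\eps_1$, and are distinct from $\eps_1$ and $\eps_2$ since $A$ has no $2$-torsion, apart from degenerate coincidences handled separately) shows that the formula forces $\eps$ to commute with $\eps_1$ and with $\eps_2$; hence $\eps,\eps_1,\eps_2$ cut out a compatible triple of decompositions and everything happens inside $A_{p_1}\oplus A_{p_2}\oplus(A_{\eps_1}^\perp\cap A_{\eps_2}^\perp)$, where $p_i$ is the prime of $\eps_i$. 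If $p_1=p_2$ this is verbatim the $p$-group statement of \cite{My1}, an $\eps'$ on a third prime being central on $A_{p_1}$ and hence automatically commuting with $\eps$; if $p_1\ne p_2$, the components $A_{p_1}$ and $A_{p_2}$ separately reduce to the (rank-one-summand) $p$-group case, and again the only extra $\eps'$ to examine are those on a third prime.

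For the direction ``geometric condition $\Rightarrow$ formula'', an extremal $\eps'$ commuting with $\eps_1$ and $\eps_2$ preserves $A_{\eps_1}\oplus A_{\eps_2}$ and its complement $A_{\eps_1}^\perp\cap A_{\eps_2}^\perp$; together with $A_\eps\subset A_{\eps_1}\oplus A_{\eps_2}$, with $A_\eps^\perp\supset A_{\eps_1}^\perp\cap A_{\eps_2}^\perp$, and with the constraint $\eps'\ne\eps_1,\eps_2$ (which pushes $A_{\eps'}$ either into the common complement or onto one of $A_{\eps_1},A_{\eps_2}$), one checks that $\eps'$ preserves both eigenspaces of $\eps$, hence commutes with it. For the converse I would argue by contraposition: if the geometric condition fails, I produce an extremal involution $\eps'$ compatible with the decomposition determined by $\eps_1$ and $\eps_2$ (hence commuting with them), distinct from $\eps_1$ and from $\eps_2$, which does not preserve $A_\eps$ or $A_\eps^\perp$ and therefore does not commute with $\eps$; concretely $\eps'$ is taken to be $-1$ on a carefully chosen indecomposable summand $C_0$ of $A_{\eps_1}^\perp\cap A_{\eps_2}^\perp$ (or of $A_{\eps_i}$, in the appropriate subcase) that is transverse to $A_\eps$, and $+1$ on a complement of $C_0$.

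The step I expect to be the main obstacle is this last construction of the witness $\eps'$: one must simultaneously keep $\eps'$ compatible with the possibly very coarse decomposition cut out by $\eps_1$ and $\eps_2$, make $\eps'$ genuinely extremal (i.e. equip it with an indecomposable eigenspace, which is delicate when the only available room lies inside a single bounded cocyclic summand), and ensure $\eps'\notin\{\eps_1,\eps_2\}$. It is exactly the exclusion of cocyclic $p$-components that supplies enough independent indecomposable summands to make these three demands simultaneously satisfiable; the componentwise bookkeeping of the cases $p_1=p_2$ and $p_1\ne p_2$, and the appeal to \cite{My1} for the single-prime situation, are then routine.
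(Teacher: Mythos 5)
Your proposal is correct and follows essentially the same route as the paper: both arguments rest on the observation that an extremal involution is concentrated on a single $p$-component and acts centrally (as $-1$) on all others, then split into the cases where $\eps,\eps_1,\eps_2$ lie in one component (quoting \cite{My1}), where $\eps_1$ and $\eps_2$ lie in different components, and where $\eps$ lies in a component containing neither $\eps_1$ nor $\eps_2$, with the non-cocyclic hypothesis supplying the extremal witness $\eps'$ in that last case. The paper's own proof is just a terser version of this same case analysis, without the explicit $-\eps_1$, $-\eps_2$ substitutions or the detailed construction of the transverse summand $C_0$.
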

\begin{proof}
If $ \eps, \eps_1 $, and $ \eps_2 $ belong to the same $ p $-component, then this statement follows from \cite{My1}.

If $ \eps_1 $ and $ \eps_2 $ belong to different components, for example, a $ p $-component and a $ q $-component, then for an extremal involution $ \eps $ to have $ A_\eps \subset A_{\eps_1} \oplus A_{\eps_2} $ means it must coincide with either $ \eps_1 $ or $ \eps_2 $. In this case, $ \eps' $ under consideration automatically commutes with $ \eps $. Conversely, if we know that every extremal involution commuting with $ \eps_1 $ from the $ p $-component and with $ \eps_2 $ from the $ q $-component also commutes with $ \eps $, it means that $ \eps $ is from the $ p $- or $ q $-component. If it is (for example) from the $ p $-component, then every extremal involution from the $ p $-component commuting with $ \eps_1 $ must commute with $ \eps $, therefore $ \eps = \eps_1 $.

Therefore, in this case, $ A_\eps \subset A_{\eps_1} \oplus A_{\eps_2} $ and $ A_\eps^\perp \supset A_{\eps_1}^\perp \cap A_{\eps_2}^\perp $ if and only if the corresponding formula $ \eps \in \eps_1 \oplus \eps_2 $ holds.

The last case is if $ \eps_1 $ and $ \eps_2 $ both belong to the same $ p $-component and $ \eps $ belongs to another $ q $-component. In this case, any involution $ \eps' $ from the $ q $-component commutes with $ \eps_1 $ and $ \eps_2 $. So if we take any extremal involution $ \eps' $ from the $ q $-component, which does not commute with $ \eps $, our formula $ \eps \in \eps_1 \oplus \eps_2 $ will not hold.

Therefore, we have considered all cases and proved the lemma.
\end{proof}

\medskip

\begin{lemma}
	Two extremal involutions $ \eps_1 $ and $ \eps_2 $ belong to different components $ A_p $ and $ A_q $ (i.e., $ A_{\eps_1} \subset A_p $ and $ A_{\eps_2} \subset A_q $, $ p \ne q $) if and only if
	\begin{multline*}
		\text{InDiffComp}(\eps_1,\eps_2) := \text{Extreme}(\eps_1) \land \text{Extreme}(\eps_2) \land \\
		\land (\eps_1 \eps_2 = \eps_2 \eps_1) \land \forall \eps \Bigl((\eps \in \eps_1 \oplus \eps_2) \land \text{Extreme}(\eps) \Rightarrow \exists z(\forall x (xz = zx) \land (\eps = \eps_1 \cdot z \lor \eps = \eps_2 \cdot z))\Bigr).
	\end{multline*}
	Therefore, there exists a formula $ \text{InOneComp}(\eps_1, \eps_2) $ which holds on extremal involutions $ \eps_1 $, $ \eps_2 $ if and only if $ \eps_1 $ and $ \eps_2 $ belong to the same $ p $-component.
\end{lemma}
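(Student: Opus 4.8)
The plan is to prove the two implications separately; the forward direction (different components $\Rightarrow$ the formula) carries the real content. I would first record two elementary facts. \emph{(a)} An involution $M$ of an abelian group with $\ker(M-a)=0$ ($a\in\{\pm1\}$) equals $-a\cdot\mathrm{id}$, since $(M-a)(M+a)=M^2-a^2=0$ forces $M+a=0$. Hence, if $\delta$ is an extremal involution of $A$ with $A_\delta\subset A_p$ and $c$ is the eigenvalue of $\delta$ on $A_\delta$ (so $A_\delta$ is the full $c$-eigenspace of $\delta$), then on each $A_r$, $r\ne p$, $\delta$ acts as the scalar $-c\cdot\mathrm{id}$, and $A_\delta^\perp=(A_\delta^\perp\cap A_p)\oplus\bigoplus_{r\ne p}A_r$. \emph{(b)} If $z\in Z(\Aut A)$ and both $\delta$ and $\delta z$ are involutions with $A_\delta\subset A_p$, then, restricting to the characteristic subgroup $A_p$ and using Baer's theorem (Theorem~3.1), $z|_{A_p}$ is multiplication by some $\lambda$ with $\lambda^2=1$, hence $\lambda=\pm1$ because $p\ne2$; so $\delta$ and $\delta z$ have the same pair of eigenspaces on $A_p$, and therefore the same indecomposable eigenspace there whenever the other one is decomposable.

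\emph{Different components $\Rightarrow$ the formula.} Let $A_{\eps_1}\subset A_p$, $A_{\eps_2}\subset A_q$, $p\ne q$. By \emph{(a)}, each of $\eps_1,\eps_2$ acts as a scalar on the other's component, so $\eps_1\eps_2=\eps_2\eps_1$ and Lemma~3.4 applies: ``$\eps\in\eps_1\oplus\eps_2$'' means $A_\eps\subset A_{\eps_1}\oplus A_{\eps_2}$ and $A_\eps^\perp\supset A_{\eps_1}^\perp\cap A_{\eps_2}^\perp$. Take such an extremal $\eps$. Since $A_{\eps_1}$ is a $p$-group and $A_{\eps_2}$ a $q$-group, the indecomposable group $A_\eps$ lies in one of them; being a direct summand of $A$ inside the indecomposable summand $A_{\eps_1}$ (resp. $A_{\eps_2}$), it coincides with it. Say $A_\eps=A_{\eps_1}$, with $\eps$-eigenvalue $a$ there. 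Write $A_p=A_{\eps_1}\oplus B_p$ with $B_p=A_{\eps_1}^\perp\cap A_p$; by \emph{(a)}, $A_{\eps_1}^\perp=B_p\oplus\bigoplus_{r\ne p}A_r$ and $A_{\eps_2}^\perp\supset A_p$, so $B_p\subset A_{\eps_1}^\perp\cap A_{\eps_2}^\perp\subset A_\eps^\perp$. Since $A_\eps^\perp$ is the full $(-a)$-eigenspace of $\eps$ and contains $B_p$ and (by \emph{(a)} for $\eps$) every $A_r$, $r\ne p$, we get $\eps|_{A_p}=(ac)\cdot\eps_1|_{A_p}$ and $\eps|_{A_r}=-a\cdot\mathrm{id}=(ac)\cdot\eps_1|_{A_r}$ for $r\ne p$; thus $\eps=(ac)\cdot\eps_1=\eps_1 z$ with $z=(ac)\cdot\mathrm{id}\in Z(\Aut A)$. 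The case $A_\eps=A_{\eps_2}$ is symmetric, so $\text{InDiffComp}(\eps_1,\eps_2)$ holds.

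\emph{The formula $\Rightarrow$ different components.} By contraposition, assume $A_{\eps_1},A_{\eps_2}\subset A_p$ for a single prime $p$, and (see the remark below) $A_{\eps_1}\ne A_{\eps_2}$; I construct an extremal $\eps$ violating the $\forall\eps$-clause. If $\eps_1,\eps_2$ do not commute the clause fails at once, so let them commute and let $A_p=\bigoplus_{s,t=\pm}A^{st}$ be their common eigenspace decomposition. Indecomposability of $A_{\eps_1}$ and $A_{\eps_2}$ forces each to be a single cell $A^{st}$; since the cell facing $A_{\eps_1}$ within $\eps_1$'s eigenspace, and the one facing $A_{\eps_2}$ within $\eps_2$'s, must vanish, $A_{\eps_1}$ and $A_{\eps_2}$ occupy opposite corners, and — as $A_{\eps_1}\ne A_{\eps_2}$ rules out $\eps_1=\eps_2$ — the remaining mixed cell $C$ is nonzero. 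So $A_p=A_{\eps_1}\oplus C\oplus A_{\eps_2}$ with all summands nonzero, $A_{\eps_1},A_{\eps_2}$ indecomposable, and $A_{\eps_1}^\perp\cap A_{\eps_2}^\perp=C$. In $A_{\eps_1}\oplus A_{\eps_2}$ (a sum of two nonzero indecomposable $p$-groups) I would choose a diagonal indecomposable summand $S$ with $A_{\eps_1}\oplus A_{\eps_2}=S\oplus S'$, $S\notin\{A_{\eps_1},A_{\eps_2}\}$, $S'\in\{A_{\eps_1},A_{\eps_2}\}$ (e.g.\ $S=\langle g_1+g_2\rangle$ for generators $g_i$ of $A_{\eps_i}$ in the cyclic case, with the usual adjustment when a factor is quasicyclic). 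Let $\eps$ act as $+1$ on $S$ and $-1$ on $S'\oplus C\oplus\bigoplus_{r\ne p}A_r$. Then $A_\eps^+=S$ is indecomposable and $A_\eps^-\supset S'\oplus C$ is decomposable, so $\eps$ is extremal with $A_\eps=S$; moreover $A_\eps=S\subset A_{\eps_1}\oplus A_{\eps_2}$ and $A_\eps^\perp\supset C$, so $\eps\in\eps_1\oplus\eps_2$ by Lemma~3.4. On $A_p$, $\eps$ has a unique indecomposable eigenspace, namely $S$, while $\eps_i$'s unique one is $A_{\eps_i}$ (its $A_p$-complement $A_{\eps_i}^\perp\cap A_p$ being decomposable); by \emph{(b)}, $\eps=\eps_i z$ with $z$ central would force $S=A_{\eps_i}$ — false. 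Hence $\text{InDiffComp}(\eps_1,\eps_2)$ fails, the lemma is proved, and one sets $\text{InOneComp}(\eps_1,\eps_2):=\text{Extreme}(\eps_1)\land\text{Extreme}(\eps_2)\land\neg\,\text{InDiffComp}(\eps_1,\eps_2)$.

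The main obstacle is the forward direction, precisely the observation that the \emph{second} part of $\eps\in\eps_1\oplus\eps_2$ (the inclusion $A_\eps^\perp\supset A_{\eps_1}^\perp\cap A_{\eps_2}^\perp$) is exactly what forces the complementary eigenspace of $\eps$ to match that of $\eps_1$, so that $\eps$ must be $\pm\eps_1$ (or $\pm\eps_2$); dropping it makes the statement false. A secondary point to flag in the statement is that one must assume $A_{\eps_1}\ne A_{\eps_2}$ (equivalently $\eps_2\ne\pm\eps_1$): when $A_{\eps_1}=A_{\eps_2}$ the involutions trivially lie in a common component, yet every extremal $\eps\in\eps_1\oplus\eps_2$ is then forced to have $A_\eps=A_{\eps_1}$ and $A_\eps^\perp=A_{\eps_1}^\perp$, hence to equal $\pm\eps_1$, so $\text{InDiffComp}$ holds — this degenerate case is therefore excluded, as it automatically is in the applications, where $\eps_1,\eps_2$ arise as the involutions attached to two distinct indecomposable summands of a fixed decomposition of $A$.
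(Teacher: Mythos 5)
Your proof is correct, and it is essentially the argument the paper intends: the paper's own ``proof'' of this lemma is a one-line reference to the case analysis inside the proof of Lemma 3.4, and your forward direction is exactly a careful elaboration of the case treated there (when $\eps_1,\eps_2$ lie in different components, any extremal $\eps$ with $A_\eps\subset A_{\eps_1}\oplus A_{\eps_2}$ and $A_\eps^\perp\supset A_{\eps_1}^\perp\cap A_{\eps_2}^\perp$ is forced to coincide with $\eps_1$ or $\eps_2$ up to a central factor); the converse direction, which the paper leaves entirely implicit, is supplied by your ``diagonal'' involution $S=\langle g_1+g_2\rangle$. Two remarks. First, your assertion that the mixed cell $C=A_{\eps_1}^\perp\cap A_{\eps_2}^\perp\cap A_p$ is nonzero is not justified and can fail: if $A_p=\mathbb Z_p\oplus\mathbb Z_p$ with $A_{\eps_1}=\langle e_1\rangle$ and $A_{\eps_2}=\langle e_2\rangle$, then $C=0$ (note that there $\eps_2=\pm\eps_1$ as automorphisms, with the two indecomposable eigenspaces designated oppositely, so ``$A_{\eps_1}\ne A_{\eps_2}$'' is not equivalent to ``$\eps_2\ne\pm\eps_1$'' as your parenthesis suggests). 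This slip is harmless — your construction only needs $S\notin\{A_{\eps_1},A_{\eps_2}\}$, and $\eps\ne\eps_i z$ then follows by comparing the unordered pairs of eigenspaces on $A_p$ rather than via the sentence ``$A_\eps^-\supset S'\oplus C$ is decomposable,'' which should be bypassed when $C=0$. Second, your observation that the statement fails in the degenerate case where $\eps_1$ and $\eps_2$ have the same eigenspace pair (there $\text{InDiffComp}$ holds although both involutions lie in one component) is a legitimate correction to the lemma as written; as you note, it does not affect the later formulas in which the lemma is used.
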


\begin{proof}
	The proof directly follows from the proof of the previous lemma.
\end{proof}

\medskip

We are also interested in involutions which are not necessarily extremal, not central in the whole $ \Aut A $ and are central on all $ q $-components $ \Aut A_p $ except one given $ p $-component. 

Now we want to distinguish them from other arbitrary involutions (this class includes extremal involutions).

Let us call them \emph{normal} involutions.
  
  \begin{lemma}
  A non-central  involution $\xi$ is normal if and only if it satisfies the formula
\begin{multline*}
  Normal (\xi): = \exists \eps \Bigl(Extreme(\eps) \land (\eps \xi =\xi \eps) \land \\
  \land \exists \eps_1 \bigl(InOneComp(\eps,\eps_1) \land \eps_1 \xi \ne \xi \eps_1\bigr) \land \forall \eps_2 \bigl(InDiffComp (\eps,\eps_2)\Rightarrow \eps_2 \xi=\xi \eps_2\bigr)\Bigr).
\end{multline*}

Respectively, a normal involution $\xi$ and an extremal involution $\eps$ correspond to the same $p$-component~$A_p$ if and only if they satisfy the formula
$$
NECorresp (\xi,\eps):= Normal(\xi) \land Extreme (\eps) \land \exists \eps' (InOneComp(\eps,\eps') \land \eps' \xi \ne \xi \eps').
$$
  \end{lemma}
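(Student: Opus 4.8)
The plan is to prove the Lemma by combining the characterizations already established in the excerpt, reducing everything to the component-wise picture coming from $\Aut A = \prod_p \Aut A_p$. First I would recall that a non-central involution $\xi \in \Aut A$ decomposes as $\xi = (\xi_p)_p$ with each $\xi_p \in \Aut A_p$ an involution (possibly trivial), and that $\xi$ is by definition \emph{normal} exactly when there is a single prime $p_0$ with $\xi_{p_0}$ non-central in $\Aut A_{p_0}$ and $\xi_p = \pm 1$ on $A_p$ for every $p \ne p_0$. So the task is to show that $Normal(\xi)$ holds iff such a $p_0$ exists and is unique.

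\smallskip

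For the forward direction, suppose $Normal(\xi)$ holds and let $\eps$ be a witness: $\eps$ is extremal, hence $A_\eps \subset A_{p_0}$ for a unique prime $p_0$, and $\eps$ commutes with $\xi$. The clause $\exists \eps_1\,(InOneComp(\eps,\eps_1) \land \eps_1\xi \ne \xi\eps_1)$ produces an extremal involution $\eps_1$ living in the same component $A_{p_0}$ that does \emph{not} commute with $\xi$; by the commutation criterion from \cite{Fuks2015} quoted in the excerpt, this forces $\xi_{p_0}$ to be non-central in $\Aut A_{p_0}$ (a central involution $\pm 1$ commutes with everything). The clause $\forall \eps_2\,(InDiffComp(\eps,\eps_2) \Rightarrow \eps_2\xi = \xi\eps_2)$ says every extremal involution living in any component $A_q$ with $q \ne p_0$ commutes with $\xi$; since each non-cocyclic $A_q$ contains ``enough'' extremal involutions to detect non-centrality — more precisely, if $\xi_q$ were non-central on $A_q$ then $A_q$ has a direct summand on which $\xi_q$ acts non-trivially, and one can build an extremal involution in $A_q$ not commuting with $\xi_q$ — we conclude $\xi_q = \pm 1$ on every such $A_q$. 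Hence $p_0$ is the unique bad prime and $\xi$ is normal in the intended sense.

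\smallskip

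For the converse, suppose $\xi$ is normal with unique bad prime $p_0$. I would choose an extremal involution $\eps$ in $A_{p_0}$ (possible since $A_{p_0}$ is non-cocyclic, so it has a nontrivial direct summand) whose decomposition $A_{p_0} = A_\eps \oplus A_\eps^\perp$ is compatible with the $\xi_{p_0}$-decomposition $A_{p_0} = A_{p_0,\xi}^+ \oplus A_{p_0,\xi}^-$, e.g. taking $A_\eps$ to be an indecomposable summand inside $A_{p_0,\xi}^+$ or $A_{p_0,\xi}^-$; then $\eps$ commutes with $\xi$ by the Fuks criterion. Since $\xi_{p_0}$ is non-central, at least one of $A_{p_0,\xi}^+, A_{p_0,\xi}^-$ is a proper nonzero summand, so I can find an extremal involution $\eps_1$ in $A_{p_0}$ that ``straddles'' this decomposition, i.e. $A_{\eps_1}$ is not contained in either $A_{p_0,\xi}^+$ or $A_{p_0,\xi}^-$, whence $\eps_1\xi \ne \xi\eps_1$; and $InOneComp(\eps,\eps_1)$ holds by construction. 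Finally any $\eps_2$ with $InDiffComp(\eps,\eps_2)$ lives in some $A_q$, $q\ne p_0$, where $\xi_q = \pm 1$ is central, so $\eps_2$ commutes with $\xi$. Thus all three conjuncts of $Normal(\xi)$ are satisfied. The statement about $NECorresp(\xi,\eps)$ then follows immediately: given a normal $\xi$ (with bad prime $p_0$) and an extremal $\eps$, the formula $\exists \eps'\,(InOneComp(\eps,\eps') \land \eps'\xi \ne \xi\eps')$ holds iff $\eps$ lies in a component containing an extremal involution that fails to commute with $\xi$, which by the above analysis happens iff that component is exactly $p_0$, i.e. iff $A_\eps \subset A_{p_0}$.

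\smallskip

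The main obstacle I anticipate is the ``enough extremal involutions'' step: verifying that in a non-cocyclic $p$-group $A_q$ ($q\ne 2$), an involution $\xi_q$ is central if and only if it commutes with every extremal involution of $\Aut A_q$. The non-trivial direction requires, from a non-central $\xi_q$, the construction of an explicit extremal (i.e. one summand indecomposable) involution witnessing non-commutation, and for this one uses that any nonzero $p$-group has indecomposable (cocyclic) direct summands together with the commutation criterion of \cite{Fuks2015}; some care is needed when $A_{q,\xi}^+$ or $A_{q,\xi}^-$ is itself cocyclic. All of this is component-local and is essentially the machinery already developed in \cite{My1} for $p$-groups, so I would cite it rather than reprove it.
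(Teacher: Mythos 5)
Your proposal is correct and follows essentially the same route as the paper: both arguments reduce to the fact that an involution is central on a component $A_q$ precisely when it commutes with every extremal involution supported in $A_q$ (an extremal involution being $\pm 1$ off the component of its indecomposable summand), so the $\eps_1$-clause detects the unique non-central component and the $\eps_2$-clause rules out a second one. The paper's proof is just a terser version of yours, likewise deferring the ``enough extremal involutions'' step to the $p$-group machinery of \cite{My1} and the commutation criterion of \cite{Fuks2015}.
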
 
  
  \begin{proof}
  	If an involution $\xi$ is not central, then it is not central on at least one component~$A_p$. Therefore there exists at least one extremal involution on~$A_p$ that does not commute with~$\xi$. If $\xi$ is not central also on~$A_q$, then there exist two exstremal involutions from different components that do not commute with~$\xi$. Therefore if the formula $Normal(\xi)$ holds, then $\xi$ is not central presicely on one component, i.\,e., is normal. Conversely, if $\xi$ is normal, then $Normal (\xi)$ holds.
  	
  	The second statement is evident. 
  \end{proof}
  
  \medskip 

With only a normal involution~$\xi$, we cannot distinguish the groups $A_\xi^+\cap A_p$ and $A_\xi^-\cap A_p$ by the first order language. Therefore we consider pairs~$(\xi, \eps)$ with the condition~$NECorresp(\xi,\eps)\land(\xi\eps = \eps\xi)$. For such pairs, either $A_\eps \subset A_\xi^+\cap A_p$ or $A_\eps \subset A_\xi^-\cap A_p$. Therefore $A_\eps$ indicates the required group among~$A_\xi^+\cap A_p$ and~$A_\xi^-\cap A_p$ (it is denoted by~$A_{(\xi, \eps)}$). The property of being a pair is denoted by the formula~$Pair(\xi, \eps)$. Instead of $\forall \xi \forall \eps (Pair(\xi, \eps) \Rightarrow (\dots))$ and $\exists \xi \exists \eps (Pair(\xi, \eps) \land (\dots))$, we will write $\forall (\xi, \eps)$ and $\exists(\xi, \eps)$, respectively.

\section{Defining $p$-components up to the center}\leavevmode

Since our goal is to prove that in the first order logic of the group $ \Aut A $ we can interpret the second order logic of $ A_p $ for every corresponding $ p $, we need for every prime $ p \ne 2 $ to define something as close as possible to the first order theory of $ \Aut A_p $.

Therefore, it is important for each prime $ p \ne 2 $ to define whether a given extremal involution belongs to $ A_p $, or not.

\medskip


\begin{lemma} In $\Aut A$ for a periodic Abelian group~$A$ without $2$-components and cocyclic components 
for any commuting extremal involutions $\eps_1$ and $\eps_2$ from one $p$-component the set
$$
D_{\eps_1,\eps_2} : = \{ x\mid x\in Z(C(\eps_1,\eps_2))\}
$$
consists of all automorphisms from $\Aut A$ that are central on all $q$-components, $q\ne p$, multiply the direct summand $A_{\eps_1}$ by some $\alpha \in \Aut A_{\eps_1}$, the direct summand $A_{\eps_2}$ --- by some  $\beta \in \Aut A_{\eps_3}$ and the direct summand $A_{\eps_1}^\perp \cap A_{\eps_2}^\perp \cap A_p$ --- by some $\gamma$ from the corresponding center.

Modulo the center $Z(\Aut A)$ we can consider these automorphisms as ''diagonal matrices'' $2\times 2$ of the form $\diag [\alpha, \beta]$, $\alpha \in \Aut A_{\eps_1}$, $\beta \in \Aut A_{\eps_2}$.
\end{lemma}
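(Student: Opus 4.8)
The plan is to push everything down to the single $p$-component $A_p$, where the claim becomes a concrete statement about commuting involutions of a $p$-group, and then to read the answer off Baer's theorem (Theorem~3.1) together with the structure facts on commuting involutions quoted above.

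First I would use $\Aut A=\prod_q\Aut A_q$. Since $A_{\eps_1},A_{\eps_2}\subseteq A_p$, each of $\eps_1,\eps_2$ acts on every $A_q$ with $q\ne p$ as $\pm 1$ (in particular centrally), and restricts on $A_p$ to commuting involutions $\bar\eps_1,\bar\eps_2$ with $A_{\bar\eps_i}=A_{\eps_i}$. Hence an element of $\prod_q\Aut A_q$ commutes with $\eps_1$ and $\eps_2$ iff its $A_p$-coordinate lies in $C_{\Aut A_p}(\bar\eps_1,\bar\eps_2)$, so
\[
C(\eps_1,\eps_2)=\Bigl(\prod_{q\ne p}\Aut A_q\Bigr)\times C_{\Aut A_p}(\bar\eps_1,\bar\eps_2),
\]
and, taking the center of a direct product,
\[
Z\bigl(C(\eps_1,\eps_2)\bigr)=\Bigl(\prod_{q\ne p}Z(\Aut A_q)\Bigr)\times Z\bigl(C_{\Aut A_p}(\bar\eps_1,\bar\eps_2)\bigr).
\]
By Theorem~3.1 each $Z(\Aut A_q)$ consists of multiplications by central scalars of $\End A_q$, which is exactly the asserted ``central on all $q$-components, $q\ne p$'' behaviour. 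It remains to identify $Z\bigl(C_{\Aut A_p}(\bar\eps_1,\bar\eps_2)\bigr)$ inside $\Aut A_p$.

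Next I would analyse $C_{\Aut A_p}(\bar\eps_1,\bar\eps_2)$. By the facts quoted from~\cite{Fuks2015}, the commuting involutions $\bar\eps_1,\bar\eps_2$ refine the two $\pm$-eigenspace decompositions of $A_p$ to the common decomposition $A_p=\bigoplus_{s,t=\pm}(A^{s}_{\eps_1}\cap A^{t}_{\eps_2})$. Because $A_{\eps_1}$ is an \emph{indecomposable} direct summand equal to one of $A^{+}_{\eps_1},A^{-}_{\eps_1}$, it must coincide with a single one of these four common-eigenspace summands, and likewise $A_{\eps_2}$. In the case the lemma is applied to, $A_{\eps_1}\ne A_{\eps_2}$, one checks --- using that $p$ is odd and that $A_{\eps_1},A_{\eps_2}$ are indecomposable --- that $A_{\eps_1}\subseteq A_{\eps_2}^{\perp}$, $A_{\eps_2}\subseteq A_{\eps_1}^{\perp}$, and that the remaining (fourth) common-eigenspace summand is $0$, so that
\[
A_p=A_{\eps_1}\oplus A_{\eps_2}\oplus R_p,\qquad R_p:=A_{\eps_1}^{\perp}\cap A_{\eps_2}^{\perp}\cap A_p,
\]
all three summands being common eigenspaces of $\bar\eps_1,\bar\eps_2$. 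An automorphism of $A_p$ commutes with both $\bar\eps_1$ and $\bar\eps_2$ iff it preserves each common eigenspace, i.e.\ iff it is block-diagonal for this decomposition, so
\[
C_{\Aut A_p}(\bar\eps_1,\bar\eps_2)=\Aut A_{\eps_1}\times\Aut A_{\eps_2}\times\Aut R_p .
\]

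Finally I would take the center of this product. Since $A_{\eps_1}$ and $A_{\eps_2}$ are indecomposable $p$-groups, i.e.\ cocyclic, the groups $\Aut A_{\eps_i}$ are abelian, whence $Z(\Aut A_{\eps_i})=\Aut A_{\eps_i}$; thus the $A_{\eps_1}$-block of an element of $D_{\eps_1,\eps_2}$ may be an arbitrary $\alpha\in\Aut A_{\eps_1}$ and the $A_{\eps_2}$-block an arbitrary $\beta\in\Aut A_{\eps_2}$. For $R_p$, Theorem~3.1 again gives that $Z(\Aut R_p)$ is the group of multiplications by units of $J_p$ or of $\mathbb Z_{p^m}$ (with $p^m$ the bound of $R_p$), i.e.\ by some $\gamma$ from the corresponding center. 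Assembling this with the first step yields exactly the claimed description of $D_{\eps_1,\eps_2}=Z(C(\eps_1,\eps_2))$. For the concluding remark: modulo $Z(\Aut A)$ --- multiplications by central scalars on the several components --- one may normalize the scalar actions on every $A_q$ ($q\ne p$) and on $R_p$ to the identity, so that the coset of $x$ is faithfully recorded by the pair $(\alpha,\beta)\in\Aut A_{\eps_1}\times\Aut A_{\eps_2}$ (itself only determined up to a common scalar), which we write as the $2\times2$ diagonal matrix $\diag[\alpha,\beta]$ on $A_{\eps_1}\oplus A_{\eps_2}$; conversely every such $\diag[\alpha,\beta]$ is realized by the automorphism which is $\alpha$ on $A_{\eps_1}$, $\beta$ on $A_{\eps_2}$, and the identity elsewhere. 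The main obstacle is the ``delicate point'' in the second paragraph --- deriving the clean decomposition $A_p=A_{\eps_1}\oplus A_{\eps_2}\oplus R_p$, in particular the vanishing of the fourth common eigenspace and the inclusions $A_{\eps_i}\subseteq A_{\eps_{3-i}}^{\perp}$, from the bare hypothesis that $\bar\eps_1,\bar\eps_2$ are commuting extremal involutions of the same $p$-component; this needs careful bookkeeping of the $\pm$-eigenspace signs and the oddness of $p$. Everything after that is just reading off centers of known abelian and Baer-type automorphism groups.
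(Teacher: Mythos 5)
Your proposal is correct, and it is worth noting that the paper itself offers no argument at all here (the proof is the single word ``Clear''), so you have in effect supplied the missing proof. Your route is the natural one: split $\Aut A=\prod_q\Aut A_q$, observe that $\eps_1,\eps_2$ act as $\pm1$ on every $A_q$ with $q\ne p$ so that the centralizer and then its center factor component-wise, and reduce to identifying $Z\bigl(C_{\Aut A_p}(\eps_1,\eps_2)\bigr)$ via the common-eigenspace decomposition. The one step you leave as ``one checks'' does go through and is worth writing out: with $A_{\eps_i}=A_{\eps_i}^{\sigma_i}$ indecomposable and $A_{\eps_1}\ne A_{\eps_2}$, the refinement $A_{\eps_i}^{\sigma_i}=(A_{\eps_i}^{\sigma_i}\cap A_{\eps_j}^{+})\oplus(A_{\eps_i}^{\sigma_i}\cap A_{\eps_j}^{-})$ forces $A_{\eps_1}$ to lie in a single eigenspace of $\eps_2$; it cannot lie in $A_{\eps_2}$ itself (else it would be a nonzero proper direct summand of an indecomposable group, or equal to it), so $A_{\eps_1}\subseteq A_{\eps_2}^{\perp}$, symmetrically $A_{\eps_2}\subseteq A_{\eps_1}^{\perp}$, and the fourth common eigenspace $A_{\eps_1}^{\sigma_1}\cap A_{\eps_2}^{\sigma_2}$ vanishes, giving $A_p=A_{\eps_1}\oplus A_{\eps_2}\oplus R_p$ with three pairwise distinct sign patterns; since $p$ is odd, commuting with both involutions is equivalent to preserving these three blocks. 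You are also right to flag that the degenerate case $A_{\eps_1}=A_{\eps_2}$ (i.e.\ $\eps_2=\pm\eps_1$) is not covered by the three-block picture, though the lemma's conclusion still holds there trivially and the lemma is only ever applied with $A_{\eps_1}\ne A_{\eps_2}$. The final ``modulo center'' normalization (killing the scalars on the $A_q$'s and on $R_p$, leaving $\diag[\alpha,\beta]$ determined up to a common central scalar) is exactly the intended reading of the paper's informal second assertion.
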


\begin{proof}
	Clear.
\end{proof}

\medskip

Now for simplicity we will concentrate on the direct summand $A_{\eps_1} \oplus A_{\eps_2}$ and will consider only such automorphisms that belong to $Z(C(\eps_1\eps_2))$, i.\,e. are arbitrary on $A_{\eps_1} \oplus A_{\eps_2}$ and central on $(A_{\eps_1} \oplus A_{\eps_2})^\perp$. Since all our automorphisms we can consider only modulo center, we will also consider these arbitrary automorphisms of $A_{\eps_1} \oplus A_{\eps_2}$ modulo center.

These automorphisms can be represented by ''invertible matrices'' 
$$
\begin{pmatrix}
\alpha & \beta\\
\gamma & \delta
\end{pmatrix},\quad \alpha \in \End A_{\eps_1}, \beta \in \Hom(A_{\eps_2}, A_{\eps_1}), \gamma \in \Hom(A_{\eps_1}, A_{\eps_2}), \delta \in  \End A_{\eps_2}.
$$
Here the ring $\End A_{\eps_i}$, $i=1,2$,  is isomorphic to the ring $\mathbb Z_{p^n}$ in the case $A_{\eps_i} \cong \mathbb Z_{p^n}$ or to the ring $J_p$ in the case $A_{\eps_i} \cong \mathbb Z_{p^\infty}$.  In the case when both $A_{\eps_1}$ and $A_{\eps_2}$ are $\mathbb Z_{p^\infty}$, we can assume that we deal with the matrices from $M_2(J_p)$. If one of the groups is $\mathbb Z_{p^\infty}$ and another is $\mathbb Z_{p^n}$, then $\Hom (\mathbb Z_{p^\infty}, \mathbb Z_{p^n})=0$, $\Hom (\mathbb Z_{p^n}, \mathbb Z_{p^\infty})\cong \mathbb Z_{p^n}$. If one the groups is $\mathbb Z_{p^n}$ and another is $\mathbb Z_{p^m}$, $n\geqslant m$, then $\Hom (\mathbb Z_{p^n}, \mathbb Z_{p^m}) \cong \mathbb Z_{p^m}$, $\Hom (\mathbb Z_{p^m}, \mathbb Z_{p^n}) \cong \mathbb Z_{p^m}$ and consists of elements from $\mathbb Z_{p^n}$ dividing by~$p^{n-m}$.


\begin{lemma}
	If under conditions of the previous lemma an automorphism $\mu \in \Aut A$ acting centrally on $(A_{\eps_1} \oplus A_{\eps_2})^\perp$, satisfies the condition 
	$$
	\eps_1 \mu \eps_1 \mu =1,
	$$
	then the corresponding matrix $\begin{pmatrix} \alpha & \beta\\ \gamma & \delta \end{pmatrix}$ satisfies the following conditions:
	 $$
	 \alpha^2-\beta \gamma =1,\quad \delta^2 - \gamma \beta=1,\quad \gamma \alpha =\delta \gamma,\quad \alpha \beta =\beta \delta.
	 $$
\end{lemma}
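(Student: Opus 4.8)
The plan is to push the relation $\eps_1\mu\eps_1\mu=1$ down to the invariant direct summand $A_{\eps_1}\oplus A_{\eps_2}$ and then read the four identities off a single $2\times 2$ block multiplication.

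First I would check that $A_{\eps_1}\oplus A_{\eps_2}$ is invariant under both players. Since $\eps_1$ commutes with $\eps_2$ it preserves the eigensummands of $\eps_2$, and $A_{\eps_2}$ is one of them (likewise $A_{\eps_1}$ is an eigensummand of $\eps_1$); hence $\eps_1$ preserves $A_{\eps_1}\oplus A_{\eps_2}$. By hypothesis $\mu$ acts as a scalar on $(A_{\eps_1}\oplus A_{\eps_2})^\perp$, so it preserves that summand and therefore also its complement $A_{\eps_1}\oplus A_{\eps_2}$. Consequently $\eps_1$ and $\mu$ restrict to $A_{\eps_1}\oplus A_{\eps_2}$ and the relation $\eps_1\mu\eps_1\mu=1$ restricts there too.

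Next, on $A_{\eps_1}\oplus A_{\eps_2}$ the automorphism $\mu$ is, by definition, $M=\begin{pmatrix}\alpha&\beta\\\gamma&\delta\end{pmatrix}$, while $\eps_1$ is $\diag[1,-1]$ or $\diag[-1,1]$ according to whether $A_{\eps_1}=A_{\eps_1}^{+}$ or $A_{\eps_1}=A_{\eps_1}^{-}$; both are $\pm J$ with $J=\diag[1,-1]$, and the sign is irrelevant since $(-J)M(-J)=JMJ$. Then I would simply multiply out: $JMJ=\begin{pmatrix}\alpha&-\beta\\-\gamma&\delta\end{pmatrix}$ and
$$(JMJ)M=\begin{pmatrix}\alpha^2-\beta\gamma & \alpha\beta-\beta\delta\\ \delta\gamma-\gamma\alpha & \delta^2-\gamma\beta\end{pmatrix}.$$
Equating this with the identity matrix gives exactly $\alpha^2-\beta\gamma=1$ and $\delta^2-\gamma\beta=1$ from the diagonal, and $\alpha\beta=\beta\delta$, $\gamma\alpha=\delta\gamma$ from the off-diagonal.

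I do not anticipate a genuine obstacle: once the reduction to $A_{\eps_1}\oplus A_{\eps_2}$ is in place the statement is a two-line matrix computation. The only point deserving a remark is the standing convention that automorphisms are taken modulo $Z(\Aut A)$. Under it $M$ is defined up to a scalar and the hypothesis reads $\eps_1\mu\eps_1\mu\in Z(\Aut A)$, so a priori $(JMJ)M=\lambda I$ for some scalar $\lambda$; the off-diagonal relations are unaffected, and the diagonal ones become $\alpha^2-\beta\gamma=\delta^2-\gamma\beta=\lambda$. Since $\End A_{\eps_i}$ is $\mathbb Z_{p^n}$ or $J_p$ with $p$ odd, $\lambda$ is a unit with a square root, which can be absorbed into the normalization of $M$ so that $\lambda=1$; taking the identity literally in $\Aut A$ gives the relations directly.
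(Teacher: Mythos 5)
Your proof is correct and is exactly the ``direct checking'' the paper performs: restrict to the invariant summand $A_{\eps_1}\oplus A_{\eps_2}$, write $\eps_1$ as $\pm\diag[1,-1]$, and multiply out $(JMJ)M=I$. One small caveat on your closing remark: in $J_p$ or $\mathbb Z_{p^n}$ with $p$ odd not every unit $\lambda$ has a square root (the squares have index $2$ in the unit group), but this is immaterial since the lemma's hypothesis is the literal identity $\eps_1\mu\eps_1\mu=1$, which your computation handles.
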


\begin{proof}
	Direct cheking.
\end{proof}

\medskip

\begin{lemma}
	Assume that under conditions of the previous lemmas an automorphism $\mu \in \Aut A$ acts centrally on $(A_{\eps_1} \oplus A_{\eps_2})^\perp$ and satisfies the  additional condition 
	$$
QTransv(\mu):=(\mu^2\ne 1) \land	(\eps_1 \mu \eps_1 \mu =1) \land (\forall d \in D_{\eps_1,\eps_2} \  (d\mu d^{-1}) \mu= \mu (d\mu d^{-1})).
	$$
	Then if $\eps_1$ and $\eps_2$ both belong to $A_p$, $p\ne 3$, then for all prime numbers $q\ne p$ there exists a root of the $q$-th power from~$\mu$ and for some such $mu$ it does not have a root of the $p$-th power.
	
	If $p=3$, then for some such $\mu$ it has a root of the $q$-th power, $q\ne 3$,  and does not have a root of the $3$-th power.
\end{lemma}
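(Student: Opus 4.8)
The plan is to translate $QTransv(\mu)$ into explicit $2\times2$ matrix conditions over the coefficient ring of the block $A_{\eps_1}\oplus A_{\eps_2}$, identify the matrices it defines as (essentially) nontrivial transvections, and then read off the roots directly. By Lemma~4.1 an element $d\in D_{\eps_1,\eps_2}$ is, modulo $Z(\Aut A)$, a diagonal matrix $\diag[\lambda_1,\lambda_2]$ with $\lambda_i\in\Aut A_{\eps_i}$, which we may normalise to $\diag[u,1]$ with $u$ ranging over the units of the relevant ring ($J_p$ when $A_{\eps_i}\cong\mathbb Z_{p^\infty}$, $\mathbb Z_{p^n}$ in the bounded case, and the mixed cases handled by the $\Hom$-module descriptions recalled before Lemma~4.2). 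Writing $\mu=\begin{pmatrix}\alpha&\beta\\\gamma&\delta\end{pmatrix}$, Lemma~4.2 already gives $\alpha^2-\beta\gamma=1$, $\delta^2-\gamma\beta=1$, $\beta(\alpha-\delta)=0$, $\gamma(\alpha-\delta)=0$; conjugation of $\mu$ by $d=\diag[u,1]$ multiplies $\beta$ by $u$ and $\gamma$ by $u^{-1}$, so imposing $(d\mu d^{-1})\mu=\mu(d\mu d^{-1})$ and comparing entries reproduces the off-diagonal relations of Lemma~4.2 and, from the diagonal, yields $(u^2-1)\beta\gamma=0$ for every unit~$u$.

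Next I would split on the value of $p$. If $p\ne 3$, then $u=2$ is a unit with $u^2-1=3$ again a unit, so $\beta\gamma=0$; hence $\alpha^2=\delta^2=1$, and since for odd $p$ the only square roots of $1$ in $J_p$ and in $\mathbb Z_{p^n}$ are $\pm1$, we get $\alpha,\delta\in\{1,-1\}$. If $\alpha\ne\delta$ one forces $\beta=\gamma=0$, so $\mu\equiv\pm\eps_1$ modulo the center, excluded by $\mu^2\ne1$; therefore $\alpha=\delta$, and dividing by the central scalar~$\alpha$ and using $\beta\gamma=0$ (and possibly transposing) shows that modulo $Z(\Aut A)$ the automorphism $\mu$ is a transvection $t_\beta=\begin{pmatrix}1&\beta\\0&1\end{pmatrix}$ with $\beta\ne0$. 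If $p=3$ the extra relation $(u^2-1)\beta\gamma=0$ is vacuous, since $u^2\equiv1$ for all units of $J_3$ resp.\ $\mathbb Z_{3^n}$; however a direct check shows every nontrivial transvection $t_\beta$ still satisfies $QTransv$, so such $\mu$ still occur.

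Finally I would analyse roots of $t_\beta$. For any prime $q\ne p$ the integer $q$ is invertible in the coefficient ring, hence $t_{q^{-1}\beta}$ is a $q$-th root of $t_\beta$; so for $p\ne3$ every $\mu$ satisfying the hypothesis has a $q$-th root for all $q\ne p$, and for $p=3$ at least the witnesses $t_\beta$ do. For the negative direction I would take $\mu=t_1$ (so $\beta=1$ is a unit --- in the bounded case this choice is essential, since $t_{p^{n-1}}$ does admit a $p$-th root). Suppose $\nu\in\Aut A$ satisfies $\nu^p=\mu$ modulo $Z(\Aut A)$; then $\nu$ commutes with $\mu$, and working on the block $A_{\eps_1}\oplus A_{\eps_2}$ modulo scalars one gets $\nu^p=\lambda t_1$ for a scalar~$\lambda$. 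Since $t_1$ is a nontrivial unipotent, it is not conjugate to a diagonalisable matrix, so the two eigenvalues of $\nu$ must coincide, say equal to $\zeta=\tfrac12\tr(\nu)\in J_p$ (resp.\ $\mathbb Z_{p^n}$), with $\zeta^p=\lambda$; by Cayley--Hamilton $\rho:=\zeta^{-1}\nu$ satisfies $(\rho-1)^2=0$, so $\rho^p=1+p(\rho-1)=t_1$, forcing $p\cdot(\rho-1)_{12}=1$, which is impossible. Hence $\mu=t_1$ has no $p$-th root, giving the second clause when $p\ne3$ and the full statement when $p=3$ (a $q$-th root for $q\ne3$, no cube root).

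The main obstacle is carrying out this last step cleanly ``modulo the center'': one must control the scalar $\lambda$ in $\nu^p=\lambda\mu$ (e.g.\ via $\det(\nu)^p=\lambda^2$ and a choice of representative), show that a root $\nu$ --- which a priori need not preserve $A_{\eps_1}\oplus A_{\eps_2}$ --- can be brought into the block using that it centralises $\mu$ and that $\mu-1$ has image inside $A_{\eps_1}$, rule out the nontrivial $p$-th roots of unity present in $\mathbb Z_{p^n}^\times$ for $n\ge2$ by passing to the quotient by scalars, and run the three cases (unbounded, bounded, mixed) for $(A_{\eps_1},A_{\eps_2})$ uniformly through the $\Hom$-module descriptions. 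Once $\nu$ is reduced to a unipotent $2\times2$ matrix the computation $\nu^p=1+p(\nu-1)$ and the contradiction are immediate.
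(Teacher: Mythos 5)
Your matrix analysis is the same as the paper's: normalise $d$ to $\diag[u,1]$, extract $(u^2-1)\beta\gamma=0$ from the commutator condition, use that $3=2^2-1$ is a unit for $p\ne 3$ to get $\beta\gamma=0$, hence $\alpha=\delta=\pm1$ by Lemma~4.2, and reduce modulo the center to a unipotent matrix whose $n$-th power scales the off-diagonal entries by $n$. Two points of divergence. First, a small overclaim: from $\beta\gamma=\gamma\beta=0$ you cannot conclude that one of $\beta,\gamma$ vanishes --- over $\mathbb Z_{p^n}$ with $n\ge 2$ both can be nonzero (e.g.\ $\beta=\gamma=p^{n-1}$) --- so the paper keeps the general form $\left(\begin{smallmatrix}1&\beta\\ \gamma&1\end{smallmatrix}\right)$ rather than a one-sided transvection $t_\beta$; this is harmless since $\mu^n=\left(\begin{smallmatrix}1&n\beta\\ n\gamma&1\end{smallmatrix}\right)$ either way. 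Second, and more substantively, your treatment of the ``no $p$-th root'' clause proves a stronger statement than is needed and is where all your acknowledged gaps live. In the paper (and in the only place the lemma is used, the formulas $Comp_p$ of Lemma~4.4) the roots are quantified exclusively over quasi-transvections, so non-existence of a $p$-th root is immediate: every quasi-transvection $\nu$ has $\nu^p=\left(\begin{smallmatrix}1&p\beta'\\ p\gamma'&1\end{smallmatrix}\right)$, and choosing $\mu$ with $\beta$ not divisible by $p$ settles it in one line. Your eigenvalue/Cayley--Hamilton argument for arbitrary $\nu\in\Aut A$ modulo center is not required, and as you note it is delicate over $J_p$ and especially over $\mathbb Z_{p^n}$ (where ``the two eigenvalues coincide'' and the control of the scalar $\lambda$ are not cleanly available); if you restrict the claim to quasi-transvection roots, as the intended reading requires, the entire detour disappears and the proof closes with the computation you already have.
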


\begin{proof}
	Let us assume that on $A_{\eps_1} \oplus A_{\eps_2}$ our $\mu$ has the matrix $\begin{pmatrix} \alpha & \beta\\ \gamma & \delta \end{pmatrix}$. Assume also that we take $d=\diag [\lambda ,1]$.
	
	In this case we have 
	$$
	\begin{pmatrix}
		 \alpha & \lambda \beta\\ 
		 \gamma  \cdot \lambda^{-1} & \delta \end{pmatrix}
		  \cdot \begin{pmatrix} \alpha & \beta\\ \gamma & \delta \end{pmatrix} =
		   \begin{pmatrix} \alpha & \beta\\ \gamma & \delta \end{pmatrix} \cdot 
		   \begin{pmatrix}
		   	\alpha & \lambda \beta\\ 
		   	\gamma  \cdot \lambda^{-1} & \delta \end{pmatrix}.
		   	$$
		   	
		   	It means 
		   	$$
		   	\alpha^2 + \lambda \beta \gamma = \alpha^2 + \beta \gamma \lambda^{-1} \text{ and }
		   	\delta^2 +  \gamma \lambda^{-1} \beta = \delta^2 + \gamma \lambda \beta,
		   	$$
		   	which immediately gives us $(\lambda^2 -1) \beta \gamma=0=\gamma (\lambda^2-1) \beta$ for all invertible $\lambda$ from $\End A_{\eps_1}$. 
		   	
		   	If $p\ne 3$, it means $\beta \gamma = 0 =\gamma \beta$, since for all $p\geqslant 5$ there exists a central invertible element $\lambda$ such that $\lambda^2-1$ is invertible.
		   	
		   	In this case from the previous lemma $\alpha^2 = \delta^2=1$ and therefore $\alpha=\pm 1$, $\delta =\pm 1$. In this case $\alpha =-\delta$ is impossible, because then $\gamma =- \gamma$ and $\beta  = -\beta$, i.\,e., $\gamma = \beta =0$ and $\mu^2=1$. Since we consider all automorphisms modulo center, we can assume that $\alpha = \delta =1$. So now we have 
		   	$$
		   	\mu = \begin{pmatrix}
		   		1& \beta \\
		   		\gamma & 1
		   		\end{pmatrix},\quad \beta \gamma=\gamma\beta =0.
		   		$$
		   		
		   		Let us find the $n$-th power of~$\mu$:
		   		$$
		   		\begin{pmatrix}
		   			1& \beta \\
		   			\gamma & 1
		   			\end{pmatrix}^2=\begin{pmatrix}
		   			1& 2\beta \\
		   			2\gamma & 1
		   			\end{pmatrix}, \dots, \begin{pmatrix}
		   			1& \beta \\
		   			\gamma & 1
		   			\end{pmatrix}^n=\begin{pmatrix}
		   			1& n\beta \\
		   			n\gamma & 1
		   			\end{pmatrix}.
		   			$$
		   			It means that if we are in the $p$-component $A_p$ of~$A$, $p\ne 3$, then all  such~$\mu$ have a root of the power~$q$  for all $q\ne p$ and there exist some such $\mu$ that does not have a root of the power~$p$ (since in all $\mathbb Z_{p^n}$ and in $J_p$ all elements are divided by all $q\ne p$ and there are elements which are not divided by~$p$).
		   			
		   			If $p=3$, then of course there exsits a corresponding $\mu$ (with $\gamma=0$, for example) which  has all roots of the power $q\ne 3$ and does not have a root of the power~$3$.   			
\end{proof}

\medskip

Let us call all automorphisms $\mu$ acting centrally on $(A_{\eps_1}\oplus A_{\eps_2})^\perp$ and satisfying the formula $QTransv_{\eps_1,\eps_2}(\mu)$, \emph{quasi-transvections} on $\eps_1$ and $\eps_2$.

\begin{lemma}
	Assume that $A=\bigoplus\limits_{p\ne 2} A_p$, where all $p$-components $A_p$ are not cocyclic.
	
	For a given prime $p\ne 3$ a given extremal involution $\eps$ on~$A$ belongs to~$A_p$ if and only if it satisfies the formula
\begin{multline*}
	Comp_p(\eps):= \exists \eps_1, \eps_2 (InOneComp(\eps,\eps_1) \land InOneComp (\eps,\eps_2)\land\\
	\land  (\eps_1\eps_2=\eps_2\eps_1) \land \forall \mu (QTransv_{\eps_1,\eps_2}(\mu) \Rightarrow \exists \nu (QTransv_{\eps_1,\eps_2}(\nu) \land \nu^3=\mu) )  \land\\
	\land  \exists \mu (QTransv_{\eps_1,\eps_2}(\mu) \land \forall \nu (QTransv_{\eps_1,\eps_2}(\nu) \Rightarrow  \nu^p \ne\mu) )) ;
\end{multline*}
 a given extremal involution $\eps$ on~$A$ belongs to~$A_3$ if and only if it satisfies the formula
 \begin{multline*}
 	Comp_3(\eps):= \exists \eps_1, \eps_2 (InOneComp(\eps,\eps_1) \land InOneComp (\eps,\eps_2)\land\\
 	\land  (\eps_1\eps_2=\eps_2\eps_1) \land \\
 	\land  \exists \mu (QTransv_{\eps_1,\eps_2}(\mu) \land \forall \nu (QTransv_{\eps_1,\eps_2}(\nu) \Rightarrow  \nu^3 \ne\mu) )) .
 \end{multline*}
\end{lemma}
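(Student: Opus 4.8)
The plan is to reduce the whole statement to the previous lemma (the structure of quasi-transvections $QTransv_{\eps_1,\eps_2}$ and of their $n$-th roots) together with the formula $InOneComp$ from Section~3. The key observation is that a quasi-transvection on $\eps_1,\eps_2$ acts centrally on $(A_{\eps_1}\oplus A_{\eps_2})^\perp$, so whether it has a root of a given power is decided purely inside the single $p$-component $A_r$ containing $\eps_1,\eps_2$, by arithmetic in the ring $\mathbb{Z}_{p^n}$ or $J_p$ attached to the prime $r$ of that component. Thus both implications come down to matching the quantifier pattern of $Comp_p(\eps)$ (resp. $Comp_3(\eps)$) against the root-existence assertions of the previous lemma. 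First I would record one auxiliary fact explicitly: from the normal form $\mu=\begin{pmatrix}1&\beta\\\gamma&1\end{pmatrix}$ with $\beta\gamma=\gamma\beta=0$ found there, together with $\mu^n=\begin{pmatrix}1&n\beta\\n\gamma&1\end{pmatrix}$, whenever $n$ is invertible in the component's ring the matrix $\begin{pmatrix}1&n^{-1}\beta\\n^{-1}\gamma&1\end{pmatrix}$ is an $n$-th root of $\mu$, and a direct check (using $\beta\gamma=\gamma\beta=0$) shows it again satisfies $QTransv_{\eps_1,\eps_2}$. So for such $\mu$ the properties ``$\mu$ has an $n$-th root'' and ``$\mu$ has a quasi-transvection $n$-th root'' are equivalent, and I use this throughout.

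\emph{Suppose $\eps$ belongs to $A_p$.} Since $A_p$ is not cocyclic it has at least two independent cocyclic direct summands; choose $\eps_1,\eps_2$ to be extremal involutions whose indecomposable summands $A_{\eps_1},A_{\eps_2}$ are two distinct such summands, diagonal in the corresponding decomposition of $A_p$. Then $\eps_1\eps_2=\eps_2\eps_1$, and $InOneComp(\eps,\eps_1)\wedge InOneComp(\eps,\eps_2)$ holds. If $p\ne 3$, the previous lemma (case $p\ne 3$) gives that every quasi-transvection on $\eps_1,\eps_2$ has a $q$-th root for every prime $q\ne p$ --- in particular a cube root, which by the auxiliary fact is again a quasi-transvection --- and that some quasi-transvection has no $p$-th root; together with the conjuncts just verified this is exactly $Comp_p(\eps)$. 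If $p=3$, the previous lemma (case $p=3$) directly supplies a quasi-transvection on $\eps_1,\eps_2$ with no cube root, which is the remaining conjunct of $Comp_3(\eps)$.

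\emph{Conversely,} suppose $Comp_p(\eps)$ (resp. $Comp_3(\eps)$) holds, witnessed by commuting extremal involutions $\eps_1,\eps_2$; by $InOneComp$ they lie in the same component $A_r$ as $\eps$, and it is enough to prove $r=p$ (resp. $r=3$). Take first $p\ne 3$. If $r=3$, the previous lemma produces a quasi-transvection on $\eps_1,\eps_2$ with no cube root at all, contradicting the conjunct $\forall\mu\,(QTransv_{\eps_1,\eps_2}(\mu)\Rightarrow\exists\nu\,(QTransv_{\eps_1,\eps_2}(\nu)\wedge\nu^3=\mu))$ of $Comp_p(\eps)$; hence $r\ne 3$. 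Then $r$ is a prime distinct from $2$ and $3$, and if also $r\ne p$ then $p$ is a prime $\ne r$, so by the previous lemma every quasi-transvection on $\eps_1,\eps_2$ has a $p$-th root --- again a quasi-transvection --- which contradicts the conjunct $\exists\mu\,(QTransv_{\eps_1,\eps_2}(\mu)\wedge\forall\nu\,(QTransv_{\eps_1,\eps_2}(\nu)\Rightarrow\nu^p\ne\mu))$. Therefore $r=p$. For $p=3$: if $r\ne 3$, the previous lemma gives every quasi-transvection on $\eps_1,\eps_2$ a $q$-th root for each prime $q\ne r$, in particular a cube root, contradicting the last conjunct of $Comp_3(\eps)$; hence $r=3$. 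In either case $\eps$ lies in the same component as $\eps_1$, i.e.\ in $A_p$, as claimed.

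I expect the main obstacle to lie not in this lemma but in the previous one, which is assumed here; in the present argument the only delicate points are the bookkeeping: that non-cocyclicity of each $A_p$ actually yields the required two commuting extremal involutions (immediate after fixing two independent cocyclic summands), that $n$-th roots of quasi-transvections for $n$ prime to the component's prime are again quasi-transvections (the explicit matrix computation above), and the degenerate possibility that a pair $\eps_1,\eps_2$ satisfying the body of $Comp_p$ happens to have $A_{\eps_1}=A_{\eps_2}$ --- which is either excluded by working modulo the center or simply observed to be harmless, since such a pair still lies in one component $A_r$ and the case analysis above used nothing more. The rest is a routine translation between the displayed first-order formulas and the statements about $InOneComp$ and quasi-transvections.
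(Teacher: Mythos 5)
Your proposal is correct and takes essentially the same route as the paper, whose entire proof of this lemma is the single line ``Directly follows from the previous lemma''; you have simply spelled out the reduction that the author leaves implicit. The one detail you add beyond the paper --- checking that an $n$-th root of a quasi-transvection is again a quasi-transvection when $n$ is invertible in the component's ring, so that root-existence and quasi-transvection-root-existence coincide --- is a genuine and necessary verification that the paper glosses over, and your explicit matrix computation handles it correctly.
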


\begin{proof}
	Directly follows from the previous lemma.
\end{proof}

\section{Conclusions}\leavevmode

Now we see that if we have a periodic Abelian group $ A = \bigoplus\limits_{p \ne 2} A_p $, where all $ A_p $ are not cyclic, then for each $ p \ne 2 $ the following are first order interpretable in the group $ \Aut A $:

\begin{enumerate}
	\item the group $ \Aut A_p $ up to the center of $ \Aut \bigoplus\limits_{q \ne p} A_q $;
	\item the group $ \Aut A_p / Z(\Aut A_p) $;
	\item the set of all extremal involutions acting on $ A_p $;
	\item the set of all normal involutions acting on $ A_p $.
\end{enumerate}

For every prime $ p \ne 2 $, this set of objects is absolutely sufficient for interpretation of the full second order theory in the case when $ A_p $ is not reduced and of the second order theory bounded by the cardinality of a basic subgroup of $ A_p $, if $ A_p $ is reduced. For reduced Abelian $ p $-groups, it was done in the papers \cite{My1} and \cite{My2}, for non-reduced Abelian $ p $-groups --- in the paper \cite{My3}, and then the joint result was published in the PhD thesis of M.\,Roizner \cite{Misha-phd}.

Since in these papers it was also proved that if for two Abelian $ p $-groups $ A $ and $ A' $ from their second order equivalence it follows elementary equivalence of their automorphism groups, and in the case of reduced $ A $ and $ A' $, from $ Th_2^{|B|} (A) = Th_2^{|B'|} (A') $ it follows $ \Aut A \equiv \Aut A' $, then if for our $ A = \bigoplus\limits_{p \ne 2} A_p $ and $ A' = \bigoplus\limits_{p \ne 2} A_p' $ without cocyclic $ p $-components we have $ \Aut A \equiv \Aut A' $, then we have $ Th_2(A_p) = Th_2(A_p') $ or $ Th_2^{|B|} (A_p) = Th_2^{|B'|} (A_p') $ for all prime $ p $ and therefore $ \Aut A_p \equiv \Aut A_p' $ for all prime $ p $.

Consequently, finally Theorem 1.2 is proved.

\begin{remark}
	We promised in the beginning of this paper to show that even if $A_p\equiv_2 A_p'$ for all prime~$p$, it does not automatically imply $\bigoplus\limits_p A_p \equiv_2 \bigoplus\limits_p A_p'$.
	
	Actually, let us take two cardinal numbers $\varkappa_1$ and $\varkappa_2$ such that $\varkappa_1< \varkappa_2$ and $\varkappa_1 \equiv_2 \varkappa_2$ (for examples of such cardinals see~\cite{BunBrag}). Let now $A_3=\bigoplus\limits_{\varkappa_1} \mathbb Z_3$, $A_3'=\bigoplus\limits_{\varkappa_2} \mathbb Z_3$, $A_5=A_5'=\bigoplus\limits_{\varkappa_2} \mathbb Z_5$, $A=A_3\oplus A_5$ and $A'=A_3'\oplus A_5'$.
	
	So $A_3\equiv_2 A_3'$, $A_5\equiv_2 A_5'$, but $A\not\equiv_2 A'$, since we can write a second order formula formulating that the $3$-component of~$A$ has a cardinality smaller than~$A$.
\end{remark}


\begin{thebibliography}{99}
	





\bibitem{Baer}
Baer R.
Automorphism rings of primary abelian operator groups.
Ann. Math., 44 (1943), 192--227.




\bibitem{BeiMikh}
Beidar C.I. and Michalev A.V. \emph{On Malcev's theorem on elementary
equivalence of linear groups.}
Contemporary mathematics, 1992, {\bf 131}, 29--35.




\bibitem{BunBrag} Bragin V.A., Bunina E.I. \emph{An example of two cardinals that are equivalent in the $n$-order logic and not equivalent in the $(n + 1)$-order logic}, J. Math. Sci., 2014, {\bf 201}, 431--437.



\bibitem{Bun2}
Bunina E.I.
\emph{Elementary equivalence of unitary linear groups over rings and skewfields},
Russian Mathematical Surveys, 1998, {\bf 53}(2), 137--138.

\bibitem{Bun3}
Bunina E.I., Mikhalev A.V. \emph{Combinatorial and logical aspects of linear groups and Chevalley groups}. Acta Applicandae Mathematicae, 2005, {\bf 85}(1--3), 57--74.

\bibitem{Bun4}
Bunina E.I.
\emph{Elementary equivalence of Chevalley groups over local rings.}
Sbornik: Mathematics, 2010, {\bf 201}(3), 3--20.

\bibitem{Bun5} Bunina E.I. \emph{Isomorphisms and elementary equivalence of Chevalley groups over commutative rings}. Sbornik: Mathematics, 2019, {\bf 210}(8), 1067--1091.

\bibitem{Abelian} Bunina E.I., Mikhalev A.V. \emph{Elementary equivalence of endomorphism rings Abelian $p$-groups}, J. Math. Sci., 2006, {\bf 137}(6), 5212--5274.

\bibitem{categories} Bunina E.I., Mikhalev A.V. \emph{Elementary equivalence
of categories of modules over rings, endomorphism rings and automorphism
groups of modules}, J. Math. Sci.,  2006, {\bf 137}(6), 5275--5335.

\bibitem{My1}
Bunina E.I., Roizner M.A.
\emph{Elementary equivalence of the automorphism groups of Abelian $p$-groups,}
J.  Math. Sci., 2010, {\bf 169}(5), 614--635.

\bibitem{AVM=B-R} Bunina E.I., Mikhalev A.V., Roizner M.A.  \emph{The criteria of elementary equivalence of automorphism groups and endomorphism rings of Abelian $p$-groups}, Dokl. Ross. Akad. Nauk, 2014, {\bf 457}(1), 11--12.


\bibitem{Keisler} Chang C.C. and Keisler H.G. \emph{Model theory}. American Elsevier Publishing Company, inc., New York, 1973, 3d Edition, 1990.





\bibitem{Sharl1}
Charles B. \emph{Le centre de l'anneau des endomorphismes
d'un groupe ab\'elien primaire},
C.~R.~Acad.\ Sci.\ Paris, 1953, {\bf 236}, 1122--1123.





\bibitem{Fuks}
Fuchs L.
\emph{Infinite abelian groups.
Volumes} I, II,  Tulane University New Orleans, Louisiana, 1970.



\bibitem{Fuks2015}
Fuchs L. \emph{Abelian groups}, Springer International Publishing Switzerland, 2015.









\bibitem{Kaplans}
Kaplansky I.
\emph{Infinite abelian groups,}
University of Michigan Press., Ann. Arbor, Michigan, 1954 and 1969.

\bibitem{What-does} Kharlampovich O., Myasnikov A. \emph{What does a group algebra of a free group “know” about the group?}. Annals of Pure and Applied Logic, 2018, {\bf 169}(6), 523--547.




\bibitem{Myas-Kharl-Sohr} Kharlampovich O., Myasnikov A., Sohrabi M. \emph{Rich groups, weak second order logic, and applications}. In ``Groups and Model Theory'', de Gruyter, 2021, 127--193.

\bibitem{Koberda} Koberda T.,  González J. \emph{Uniform first order interpretation of the second order theory of countable groups of homeomorphisms}, 2024, https://arxiv.org/abs/2312.16334, 32 pp.


\bibitem{Leptin}
Leptin H.
\emph{Abelsche $p$-Gruppen und ihre Automorphismengruppen},
Math. Z., 1960, {\bf 73}, 235--253.

\bibitem{Liebert}
Liebert W.
\emph{Isomorphic automorphism groups of primary abelian groups}. II,
Contemp. Math., 1989, {\bf 87}, 51--59.

\bibitem{Maltsev}   Maltsev A.I. \emph{On elementary properties of linear groups.
Problems of mathematics and mechanics}, Novosibirsk, 1961, 110--132 (in Russian).





\bibitem{My2}
Roizner M.A.
\emph{A criterion of elementary equivalence of automorphism groups of reduced Abelian $p$-groups},
J. Math. Sci., 2013, {\bf 193}, 586--590.

\bibitem{My3} Roizner M.A. \emph{A criterion of elementary equivalence of automorphism groups of unreduced Abelian $p$-groups}, J. Math. Sci., 2014, {\bf 201},  519--526.

\bibitem{Misha-phd} Roizner M.A. \emph{Elementary equivalence of endomorphism rings and automorphism groups of abelian $p$-groups}, PhD thesis, Moscow, Russia, 2013. 

\bibitem{Shelah}
Shelah S.
\emph{Interpreting set theory in the endomorphism semi-group of a free algebra or in the category},
Annales Scientifiques L'universite Clermont, 1976, {\bf 13}, 1--29.

\bibitem{a1} Szmielew W. \emph{Elementary properties of Abelian groups}. 
Fundamenta Mathematica, 1955, {\bf 41}, 203--271.


\bibitem{Tolstyh} Tolstykh~V. \emph{Elementary equivalence of infinite-dimensional
classical groups}, Annals of Pure and Applied Logic, 2000, {\bf 105}, 103--156.

    \end{thebibliography}
\end{document}